\newtheorem{theorem}{Theorem}[section]
\newtheorem{lemma}[theorem]{Lemma}
\newtheorem{corollary}[theorem]{Corollary}
\newtheorem{remark}[theorem]{Remark}
\newtheorem{proposition}[theorem]{Proposition}
\newtheorem{definition}[theorem]{Definition}
\newtheorem{example}[theorem]{Example}
\numberwithin{equation}{section}
\newcommand{\CC}{C_k}
\newcommand{\NN}{\mathbb{N}}
\newcommand{\w}{\omega}
\newcommand{\KK}{\mathcal{K}}
\newcommand{\RR}{\mathbb{R}}
\newcommand{\VV}{\mathbb{V}}
\newcommand{\Nn}{\mathcal{N}}
\newcommand{\IR}{\mathbb{R}}
\newcommand{\xxx}{\mathbf{x}}
\newcommand{\yyy}{\mathbf{y}}
\newcommand{\Pp}{\mathfrak{P}}
\newcommand{\e}{\varepsilon}
\renewcommand{\phi}{\varphi}
\newcommand{\U}{\mathcal U}
\newcommand{\supp}{\mathrm{supp}}
\newcommand{\conv}{\mathrm{conv}}
\newcommand{\spn}{\mathrm{span}}
\newcommand{\Tt}{\mathfrak{T}}
\title[On the Ascoli property for locally convex spaces]{On the Ascoli property for locally convex spaces}
\author[S. S.~Gabriyelyan]{Saak Gabriyelyan}
\address{Department of Mathematics, Ben-Gurion University of the
Negev, Beer-Sheva, P.O. 653, Israel}
\email{saak@math.bgu.ac.il}
\subjclass[2000]{Primary 46A03; Secondary 54A25, 54D50}
\keywords{the Ascoli property, free locally convex space, direct sum, inductive limit, compactly barrelled space }
\begin{document}

\begin{abstract}
We characterize Ascoli spaces by showing that a Tychonoff space $X$ is Ascoli iff the canonical map from the free locally convex space $L(X)$ over $X$ into $C_k\big(C_k(X)\big)$ is an embedding of locally convex spaces. We prove that an uncountable direct sum of non-trivial locally convex spaces is not Ascoli.
If a $c_0$-barrelled space $X$ is weakly Ascoli, then $X$ is linearly isomorphic to a dense subspace of $\mathbb{R}^\Gamma$ for some $\Gamma$. Consequently, a Fr\'{e}chet space $E$ is weakly Ascoli iff $E=\mathbb{R}^N$ for some $N\leq\omega$. If $X$ is a $\mu$-space and a $k$-space (for example, metrizable), then $C_k(X)$ is weakly Ascoli iff $X$ is discrete. We prove that the weak* dual space of a Banach space $E$ is Ascoli iff $E$ is finite-dimensional.
\end{abstract}

\maketitle



\section{Introduction. }


Topological properties of  the space $C(X)$ of all continuous real-valued functions  on  a Tychonoff space $X$ endowed with the pointwise topology or the compact-open topology, which we denote by  $C_p(X)$ and $\CC(X)$, respectively, are of great importance and have been intensively studied from many years (see, for example, \cite{Arhangel,Jar,kak,mcoy} and references therein). Let us mention metrizability, the Fr\'{e}chet--Urysohn property, sequentiality, the $k$-space property, the $k_\IR$-space property and countable
tightness. It is easy to see that $C_p(X)$ is metrizable if and only if $X$ is countable. Pytkeev, Gerlitz and Nagy (see \S 3 of \cite{Arhangel}) characterized  spaces $X$ for which $C_p(X)$ is Fr\'{e}chet--Urysohn, sequential or a $k$-space (these properties coincide for the spaces $C_p(X)$). McCoy proved in \cite{mcoy1} that for a first countable paracompact $X$ the space $\CC(X)$ is a $k$-space if and only if $X$ is hemicompact, so $\CC(X)$ is metrizable.
Being motivated by the classic Ascoli theorem we introduced in \cite{BG} a new class of topological spaces, namely, the class of Ascoli spaces. A Tychonoff space $X$  is {\em Ascoli} if every compact subset of $\CC(X)$  is evenly continuous.
By Ascoli's theorem \cite[Theorem 3.4.20]{Eng}, each $k$-space is Ascoli, and Noble \cite{Noble} proved that any $k_\IR$-space is Ascoli.
So we have the following diagram
\[
\xymatrix{
\mbox{metric} \ar@{=>}[r] & {\mbox{Fr\'{e}chet--}\atop\mbox{Urysohn}} \ar@{=>}[r] & \mbox{sequential} \ar@{=>}[r] &  \mbox{$k$-space} \ar@{=>}[r] &  \mbox{$k_\IR$-space} \ar@{=>}[r] &  {\mbox{Ascoli}\atop\mbox{space}}, }
\]
and none of these implications is reversible. The Ascoli property for function spaces has been studied recently  in \cite{Banakh-Survey,BG,Gabr-C2,GGKZ,GGKZ-2,GKP}.
Let us mention the following
\begin{theorem}  \label{t:Cp-Ascoli-lc}
{\rm (i) (\cite{GGKZ-2})} If $X$ is a locally compact space, then $C_p(X)$ is an Ascoli space if and only if $X$ is scattered. In particular, for every ordinal $\kappa$ the space $C_p(\kappa)$ is Ascoli.

\smallskip
{\rm (ii) (\cite{GKP})} If $X$ is metrizable, then $\CC(X)$ is Ascoli if and only if $X$ is locally compact.
\end{theorem}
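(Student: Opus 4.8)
The plan is to prove both biconditionals by the same template, splitting each into a \emph{positive} implication (scatteredness, resp.\ local compactness, forces the Ascoli property) and a \emph{negative} implication (its failure produces a $\CC$-compact, non-evenly-continuous family in the relevant second dual). The tools I would set up first are: the definition that $Y$ is Ascoli exactly when every compact $\mathcal K\subseteq\CC(Y)$ is evenly continuous; the fact, recorded in the introduction, that every $k_\IR$-space is Ascoli (Noble); the heredity of the Ascoli property under \emph{closed} subspaces; and the standard $C_p$/$\CC$ reductions — a continuous surjection $\phi\colon Z\twoheadrightarrow W$ gives a closed embedding $g\mapsto g\circ\phi$ of $C_p(W)$ (resp.\ $\CC(W)$) into $C_p(Z)$ (resp.\ $\CC(Z)$), a compact subset of a Tychonoff space is $C^*$-embedded, and, for \emph{metrizable} $X$, a closed subspace $F$ admits a continuous linear Dugundji extension operator, so that $\CC(F)$ is a closed (complemented) subspace of $\CC(X)$.

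For part (i), the negative implication starts from the structural fact that a locally compact space that is not scattered contains a nonempty compact perfect subset $P$; building a dyadic tree of nonempty open subsets of $P$ with pairwise disjoint closures yields a continuous surjection $P\twoheadrightarrow[0,1]$ and, along the branches of $\cantor$, a Cantor-tree-indexed family of functions in $C(P)$, which extend to $C(X)$ since $P$ is $C^*$-embedded. Using this family I would construct \emph{directly} inside $C_p\big(C_p(X)\big)$ a compact set whose pointwise cluster values violate even continuity; the point to be careful about is that the naive candidate $\{\delta_x:x\in P\}$ of point evaluations is \emph{not} compact in the compact-open topology (its $\CC$-compactness would force every compact $Q\subseteq C_p(X)$ to be equicontinuous on $P$, which fails), so the witnessing family must instead be assembled from suitable null sequences of discrete measures adapted to the tree. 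For the positive implication I would verify even continuity of an arbitrary $\CC$-compact $\mathcal K\subseteq C\big(C_p(X)\big)$ by transfinite induction on the Cantor--Bendixson rank of $X$: isolated points give coordinates on which even continuity is automatic, and local compactness lets one peel off each scattered level and reduce $\mathcal K$ to strictly lower rank, with the local compact structure controlling the limit stages.

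For part (ii), the positive implication uses that a locally compact metrizable space is paracompact, hence a topological sum $X=\bigsqcup_{i\in I}X_i$ of clopen $\sigma$-compact — thus hemicompact — pieces, so that $\CC(X)\cong\prod_{i\in I}\CC(X_i)$ with every factor a Fréchet space. I would then invoke the product lemma that an arbitrary product of Fréchet spaces (in particular every power $\IR^\Gamma$) is a $k_\IR$-space, and conclude by Noble's theorem that $\CC(X)$ is Ascoli. The negative implication is again a construction: if metrizable $X$ fails to be locally compact at $x_0$, no closed ball about $x_0$ is compact, so one extracts a closed copy $F=\{x_0\}\cup\bigcup_n D_n\subseteq X$ of a non-locally-compact fan, with each $D_n$ discrete and accumulating only at $x_0$; by Dugundji, $\CC(F)$ is a closed subspace of $\CC(X)$, so it suffices to manufacture a $\CC$-compact, non-evenly-continuous family in $\CC\big(\CC(F)\big)$ for this concrete $F$, which then obstructs the Ascoli property of $\CC(X)$.

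The step I expect to be the main obstacle is, in both negative directions, the explicit construction of the witnessing compact set: the family must be compact in the \emph{compact-open} topology of the second dual while simultaneously failing even continuity, and — as the collapse of the point-evaluation candidate already signals — these two demands pull against each other, so the family has to be tailored to the Cantor (resp.\ fan) structure and its compactness proved by hand. A secondary, genuinely nontrivial input is the product lemma used for (ii), that arbitrary products of Fréchet spaces are $k_\IR$-spaces; I would isolate and prove (or cite) it separately rather than grind it out inline.
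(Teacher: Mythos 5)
First, a point of order: this theorem is not proved in the paper at all --- it is quoted as background from \cite{GGKZ-2} and \cite{GKP} --- so your proposal can only be measured against those sources and on its own terms. Measured that way, it is an architecture rather than a proof, and the missing pieces are exactly the load-bearing ones. In both negative implications the entire content is the explicit compact, non-evenly-continuous subset of $\CC\big(\CC(X)\big)$ (adapted to the Cantor tree in (i), to the fan in (ii)), and in both cases you explicitly defer its construction (``assembled from suitable null sequences of discrete measures'', ``it suffices to manufacture\dots''); as you yourself observe, the obvious candidates collapse, so nothing here can be waved through. (Your stated reason why $\{\delta_x:x\in P\}$ fails is also slightly off: compactness of the image is not the issue; the relevant fact is Lemma \ref{l:Ascoli-free-evenly}(vi) of the present paper, that $\delta$ is continuous into $\CC\big(C_p(X)\big)$ iff every compact subset of $C_p(X)$ is equicontinuous.) More seriously, the positive implication of (i) rests on a transfinite induction on the Cantor--Bendixson rank of $X$ that has no defined action on the object being analyzed: a compact $\KK\subseteq \CC\big(C_p(X)\big)$ consists of functions on the function space, and ``peeling off'' the isolated points of $X$ does not reduce $\KK$ to anything of lower rank, nor is any mechanism offered at limit stages. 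The actual proof in \cite{GGKZ-2} runs through a different invariant entirely: for locally compact $X$, scatteredness of $X$ is equivalent to $C_p(X)$ being $\kappa$-Fr\'echet--Urysohn, and $\kappa$-Fr\'echet--Urysohn spaces are Ascoli by \cite{GGKZ}; nothing playing this role appears in your plan.

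A second genuine gap is the tool ``heredity of the Ascoli property under closed subspaces'': this is not available in general --- a compact subset of $\CC(F)$ for closed $F\subseteq X$ gives no compact subset of $\CC(X)$, since continuous functions on $F$ need not extend, and no such heredity theorem exists in the literature you could cite. What is true, and what your own Dugundji step actually delivers, is invariance under retracts (Proposition 5.2 of \cite{BG}): the extension operator $E\colon\CC(F)\to\CC(X)$ together with the restriction $\rho$ satisfies $\rho\circ E=\mathrm{id}$, so $\CC(F)$ is complemented, hence a retract of $\CC(X)$, and that repairs this particular use --- but the tool as you stated it should be struck. On the credit side, your positive half of (ii) is correct and is in fact the route of \cite{GKP}: decompose the locally compact metrizable $X$ into a topological sum of clopen hemicompact pieces, get $\CC(X)\cong\prod_i\CC(X_i)$ with Fr\'echet factors, and invoke Noble's theorem \cite{Nob} that products of \v{C}ech-complete spaces are $k_\IR$-spaces. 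But with both witnessing constructions absent and the positive half of (i) resting on an unworkable induction, the proposal does not yet establish either equivalence.
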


It is well-known (see \cite{Arhangel}) that a topological space $X$ is Tychonoff if and only if the canonical valuation map $X\to C_p(C_p(X))$ is an embedding. Replacing the pointwise topology by the compact-open topology we obtain a characterization of Ascoli spaces given in \cite{BG}: $X$ is Ascoli if and only if  the canonical map $X\to \CC(\CC(X))$ is an embedding. Below we obtain a `locally convex' characterization of  Ascoli spaces  using the notion of free locally convex space introduced by Markov \cite{Mar} (for all relevant definitions see Section \ref{sec-1}). 
\begin{theorem} \label{t:Ascoli-free-map}
A Tychonoff space $X$ is  Ascoli if and only if  the canonical map $\Delta_X: L(X)\to \CC\big(\CC(X)\big)$  is an embedding of locally convex spaces.
\end{theorem}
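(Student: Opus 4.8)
The plan is to factor the canonical map as $i_X=\Delta_X\circ j_X$, where $j_X\colon X\to L(X)$, $x\mapsto\delta_x$, is the insertion of the generators and $i_X\colon X\to\CC(\CC(X))$, $i_X(x):=\Delta_X(\delta_x)$, is the evaluation $f\mapsto f(x)$, and then to reduce the statement to a comparison of two locally convex topologies on the free vector space underlying $L(X)$. I will use throughout the standard facts (recalled in Section~\ref{sec-1}) that $j_X$ is a topological embedding onto a Hamel basis and that $L(X)'=C(X)$. On the algebraic level $\Delta_X$ sends $\mu=\sum_i a_i\delta_{x_i}$ to the functional $f\mapsto\langle\mu,f\rangle:=\sum_i a_i f(x_i)$, and this map is injective for every Tychonoff $X$: if $\langle\mu,f\rangle=0$ for all $f\in C(X)$, then $\mu=0$ because $C(X)=L(X)'$ separates the points of $L(X)$. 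Unravelling the compact-open topology of $\CC(\CC(X))$ on the value space $\CC(X)$, the subspace topology that $\CC(\CC(X))$ induces on $\Delta_X(L(X))$ is exactly the topology $\tau_k$ of uniform convergence on the compact subsets of $\CC(X)$; I write $\tau_L$ for the given topology of $L(X)$.

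The forward implication is then formal: if $\Delta_X$ is an embedding, then $i_X=\Delta_X\circ j_X$ is a composition of embeddings, hence an embedding, so $X$ is Ascoli by the characterisation of \cite{BG}. For the converse I would first dispose of continuity softly. Assuming $X$ Ascoli, \cite{BG} gives that $i_X$ is an embedding, in particular continuous, so the universal property of $L(X)$ makes the linear extension $\Delta_X$ continuous, i.e.\ $\tau_k\subseteq\tau_L$. Since $\Delta_X$ is injective and continuous, the whole statement reduces to proving that $\Delta_X$ is relatively open, that is, $\tau_L\subseteq\tau_k$.

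I expect this last inclusion to be the crux, and the method is a polar computation. Take a closed absolutely convex $\tau_L$-neighbourhood $U$ of $0$ with $\{p\le1\}\subseteq U$ for a continuous seminorm $p$, and set $H:=U^\circ\subseteq C(X)$, so that $U=H^\circ$ by the bipolar theorem. As the polar of a neighbourhood, $H$ is compact for $\sigma(C(X),L(X))$, which is the topology of pointwise convergence on $X$ (since $X$ spans $L(X)$); thus $H$ is $C_p(X)$-compact. The decisive point is to upgrade this to compactness in $\CC(X)$: from $|\langle\mu,h\rangle|\le p(\mu)$ one gets $|h(x)-h(y)|\le p(\delta_x-\delta_y)$ and $|h(x)|\le p(\delta_x)$ for all $h\in H$, and because $j_X$ is continuous the map $(x,y)\mapsto p(\delta_x-\delta_y)$ is a continuous pseudometric on $X$ while $x\mapsto p(\delta_x)$ is continuous; hence $H$ is equicontinuous and pointwise bounded \emph{as a family of functions on} $X$. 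Since pointwise convergence and compact-open convergence coincide on equicontinuous families, the $C_p(X)$-compact set $H$ is in fact $\CC(X)$-compact, and therefore $U=H^\circ$ is by definition a $\tau_k$-neighbourhood of $0$. This yields $\tau_L\subseteq\tau_k$, hence $\tau_L=\tau_k$, and $\Delta_X$ is an embedding of locally convex spaces.

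The genuine difficulty lies entirely in this openness step, and specifically in converting the abstract equicontinuity of $H\subseteq L(X)'$ into honest equicontinuity of $H$ as functions on $X$ (which is exactly where the free-locally-convex structure, through the continuity of $j_X$, is essential), followed by the Ascoli-type passage from $C_p(X)$- to $\CC(X)$-compactness. Everything else — injectivity, continuity of $\Delta_X$, and the forward direction — is a formal consequence of the universal property of $L(X)$ and of the characterisation of Ascoli spaces in \cite{BG}.
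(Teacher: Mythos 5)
Your proof is correct, and in the crucial direction it takes a genuinely different route from the paper. The sufficiency part (embedding $\Rightarrow$ Ascoli) is the same in both: restrict $\Delta_X$ to $X\subseteq L(X)$ and invoke the characterization of Ascoli spaces from \cite{BG}. For necessity, the paper's proof rests on Raikov's theorem \cite{Rai} that the topology $\pmb{\nu}_X$ of $L(X)$ is the topology of uniform convergence on equicontinuous pointwise bounded subsets of $C(X)$; it then identifies such sets, up to closure, with the compact equicontinuous subsets of $\CC(X)$ (Lemma \ref{l:Ascoli-free-evenly} and Proposition \ref{p:Ascoli-Free-Ck}), and finally uses the Ascoli property to drop the word ``equicontinuous'', so that $\pmb{\nu}_X$ becomes the topology induced from $\CC\big(\CC(X)\big)$. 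You never invoke Raikov's theorem: you obtain continuity of $\Delta_X$ softly from the universal property applied to the continuity of $\delta_X$ --- which is the only place the Ascoli hypothesis enters, via \cite{BG} or Lemma \ref{l:Ascoli-free-evenly}(vi) --- and you prove relative openness by a duality computation: Alaoglu--Bourbaki makes $H=U^\circ$ compact in $C_p(X)$, the bipolar theorem gives $U=H^\circ$, the inequality $|h(x)-h(y)|\le p(\delta_x-\delta_y)$ together with the continuity of $x\mapsto\delta_x$ converts abstract equicontinuity of $H\subseteq L(X)'$ into equicontinuity of $H$ as functions on $X$, and the coincidence $\tau_p=\tau_k$ on equicontinuous pointwise bounded families (Proposition \ref{p:Ascoli-Free-Ck}) upgrades $H$ to a compact subset of $\CC(X)$, whence $U$ is a neighborhood in the induced topology. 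In effect, your openness step is a self-contained re-proof of exactly the half of Raikov's theorem that the paper cites as a black box. What your decomposition buys is a sharper structural picture: $\Delta_X$ is injective and relatively open for \emph{every} Tychonoff $X$, and the Ascoli property is needed precisely for its continuity; what the paper's route buys is brevity, since Raikov's description of $\pmb{\nu}_X$ settles both inclusions at once.
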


If $X=D$ is a countable infinite discrete space, then the space $L(D)$ coincides with the direct sum $\phi$ of countably many copies of $\IR$ endowed with the box topology. Note that $\phi$ is a sequential non-Fr\'{e}chet--Urysohn space, in particular, $\phi$ is Ascoli. However, if $D$ is an {\em uncountable} discrete space the situation changes: $L(D)$ is not an Ascoli space. This result is stated in \cite{Banakh-Survey}, we give an elementary direct proof of a more general assertion, see Theorem \ref{tFreeSpace-Ascoli} below.

It is well-known that the {\em locally convex} properties to be a barrelled, Mackey or bornological space are preserved under taking locally convex direct sums. These facts motivate the following question: When the locally convex direct sum $E$ of a family of non-trivial Ascoli locally convex spaces is  an Ascoli space? We proved in \cite{Gab-LF} that the locally convex direct sum $E$ of a {\em sequence} $\{ E_n\}_{n\in\w}$ of nontrivial {\em metrizable} locally convex spaces is  an Ascoli space if and only if all the $E_n$ are finite-dimensional, so $E=\phi$. In the next theorem we show that {\em uncountable} locally convex direct sums of locally convex spaces are never Ascoli. Moreover, we show that the operation of taking  uncountable locally convex direct sums of locally convex spaces destroys also sequential-type and generalized metric type {\em topological} properties.

\begin{theorem} \label{t:Ascoli-directsum-LCS}
Let $E:=\bigoplus_{i\in\kappa} E_i$ be the direct sum of an uncountable family $\{ E_i\}_{i\in\kappa}$  of non-trivial locally convex spaces  and let $\tau$ be either  the locally convex direct sum topology $\mathcal{T}_{lc}$ or the box topology $\mathcal{T}_b$ on   $E$. Then:
\begin{enumerate}
\item[{\rm (i)}] $(E,\tau)$ is not an Ascoli space;
\item[{\rm (ii)}] $(E,\tau)$   has uncountable tightness;
\item[{\rm (iii)}] $(E,\tau)$ has uncountable $cs^\ast$-character and uncountable $cn$-character.
\end{enumerate}
\end{theorem}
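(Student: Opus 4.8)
The entire theorem can be driven by one family of finitely supported vectors, so the plan is to build that family first and then read off all three items. Using that each $E_i$ is a nontrivial (Hausdorff) locally convex space, fix a nonzero $a_i\in E_i$ and, by Hahn--Banach, a functional $\chi_i\in E_i'$ with $\chi_i(a_i)=1$; let $\hat\chi_i(x)=\chi_i(x_i)$ be the induced functional on $E$, which is continuous for both $\mathcal{T}_{lc}$ and $\mathcal{T}_b$ since coordinate projections are. For a nonempty finite $F\subseteq\kappa$ set
\[
a_F=\frac{1}{|F|^2}\sum_{i\in F}a_i\in E,\qquad A:=\{a_F\}.
\]
The two facts I would isolate at the start are: \textbf{(P1)} $0$ lies in the $\mathcal{T}_b$-closure of $A$ (hence also in its $\mathcal{T}_{lc}$-closure); and \textbf{(P2)} for every countable $B\subseteq A$ one has $0\notin\overline{B}^{\,\mathcal{T}_{lc}}$ (hence $0\notin\overline{B}^{\,\mathcal{T}_b}$, as $\mathcal{T}_b\supseteq\mathcal{T}_{lc}$). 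For (P1) in the box topology: given balanced neighborhoods $U_i$, let $m_i$ be least with $\tfrac1{m^2}a_i\in U_i$; since $i\mapsto m_i$ maps the uncountable $\kappa$ into $\omega$, some fibre is infinite, and an $F$ inside it with $|F|\ge m_i$ for $i\in F$ gives $a_F\in\prod U_i$. For (P1) in $\mathcal{T}_{lc}$ I would use that its continuous seminorms are exactly $p(x)=\sum_i p_i(x_i)$; writing $c_i=p_i(a_i)$, if every level set $\{i:c_i\le m\}$ were finite then $\kappa$ would be countable, so one is infinite and $p(a_F)=\tfrac1{|F|^2}\sum_{i\in F}c_i\to0$ along it. For (P2): enumerate the countable support $I=\{i_1,i_2,\dots\}$ of $B$ and take $p_{i_k}(a_{i_k})=k$; then for any nonempty $F\subseteq I$ of size $m$, $p(a_F)\ge\tfrac1{m^2}\sum_{j=1}^m j=\tfrac{m+1}{2m}>\tfrac12$, so $\{p<\tfrac12\}$ misses $B$. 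Items (P1) and (P2) give (ii) for both topologies at once, since any countable $B$ witnessing clustering would contradict (P2).

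For (i) I would exhibit a compact non-evenly-continuous set in $C_k(E)$ built from the same vectors. Fix a tent $\psi\colon\IR\to[0,1]$ with $\psi(1)=1$ and $\supp\psi\subseteq[\tfrac12,\tfrac32]$, and put $f_F(x)=\prod_{i\in F}\psi\big(|F|^2\,\hat\chi_i(x)\big)$. Then $f_F$ is continuous (a finite product of continuous functions), $f_F(0)=0$, $f_F(a_F)=1$, and $f_F$ vanishes unless $F\subseteq\supp x$. Set $K=\{f_F\}\cup\{0\}$. The compactness of $K$ in $C_k(E)$ rests on the standard fact that compact (indeed bounded) subsets of a locally convex or box direct sum are finitely supported: if $C\subseteq\bigoplus_{i\in G}E_i$ with $G$ finite, then $f_F\restriction C\equiv0$ whenever $F\not\subseteq G$, so every basic $C_k(E)$-neighborhood of $0$ contains all but finitely many $f_F$, making $K$ a one-point-compactification-type compactum. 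Since every member of $K$ vanishes at $0$, even continuity of $K$ at $0$ is equivalent to equicontinuity there; but by (P1) every neighborhood of $0$ contains some $a_F$ with $f_F(a_F)=1$, so $K$ is not equicontinuous at $0$, hence not evenly continuous. This shows $(E,\tau)$ is not Ascoli for both $\tau$.

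For (iii) I would first prove uncountable $cs^\ast$-character at $0$ by a neighborhood-versus-network diagonalization against the coordinate sequences $s^i=(\tfrac1n a_i)_n\to0$. Suppose $\mathcal N=\{N_k\}_{k\in\omega}$ is a candidate $cs^\ast$-network; say $N_k$ \emph{catches} $i$ if it contains $\tfrac1n a_i$ for infinitely many $n$, and let $\mathcal K$ be the set of $k$ catching uncountably many $i$, with $S_k=\{i:N_k\text{ catches }i\}$. For $k\in\mathcal K$ pick distinct $\beta_k\in S_k$ and a point $\tfrac1{\ell_k}a_{\beta_k}\in N_k$; then choose $\alpha^\ast\in\kappa$ outside the countable set $\bigcup_{k\notin\mathcal K}S_k\cup\{\beta_k:k\in\mathcal K\}$, and a neighborhood $U$ of $0$ that retains the tail of $s^{\alpha^\ast}$ but expels each $\tfrac1{\ell_k}a_{\beta_k}$ (a seminorm with $p_{\beta_k}(a_{\beta_k})$ large for $\mathcal{T}_{lc}$, a product factor excluding the point for $\mathcal{T}_b$). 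No $N_k\subseteq U$ can catch $s^{\alpha^\ast}$: for $k\notin\mathcal K$ this would force $\alpha^\ast\in S_k$, and for $k\in\mathcal K$ the expelled point shows $N_k\not\subseteq U$. Thus $\mathcal N$ fails and $cs^\ast_\chi(0)>\omega$. For the $cn$-character I would invoke the elementary implication that a \emph{countable} $cn$-network at $0$ is automatically a $cs^\ast$-network: given $x_n\to0$ and a neighborhood $U$, the set $\bigcup\{N\in\mathcal N:0\in N\subseteq U\}$ is a neighborhood containing a tail, and pigeonholing that tail among the countably many participating $N$ yields a single $N\subseteq U$ catching infinitely many $x_n$. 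Hence uncountable $cs^\ast$-character forces uncountable $cn$-character.

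The step I expect to be the main obstacle is getting (P1) and (P2) to hold \emph{simultaneously} across the two topologies: (P1) must survive in the fine topology $\mathcal{T}_b$ while (P2) must survive in the coarse topology $\mathcal{T}_{lc}$, and these pull in opposite directions. The quadratic normalization $|F|^{-2}$ is exactly the balance point — large enough spread that an uncountable pigeonhole forces clustering at $0$ globally, yet bounded below by the weights $p_{i_k}(a_{i_k})=k$ on every countable coordinate block — and verifying this dual behavior, together with the neighborhood-against-network bookkeeping in (iii), is where the real care is needed; the continuity and compactness checks in (i) are then routine.
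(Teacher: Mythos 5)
Your route is genuinely different from the paper's. The paper reduces everything to the scalar case: using Proposition \ref{p:Finite-direct-summund} it splits each $E_i=\IR\oplus\widetilde{E}_i$, writes $(E,\tau)=(\VV_\kappa,\tau|_{\VV_\kappa})\times(\widetilde{E},\tau|_{\widetilde{E}})$, proves all three properties for $(\VV_\kappa,\tau)$ with $\pmb{\tau}_\kappa\subseteq\tau\subseteq\pmb{\nu}_\kappa$ (Theorem \ref{tFreeSpace-Ascoli}, invoking Proposition \ref{p:Ascoli-sufficient} for the Ascoli part), and then transfers them by heredity and by Proposition 5.2 of \cite{BG} (Ascoli passes to direct summands). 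You instead work inside the full sum via Hahn--Banach coordinates $\hat\chi_i$; your points $a_F$ are precisely the paper's sets $R_n$ (support of size $n$, entries $1/n^2$) transplanted to general summands, and your compact set $K=\{f_F\}\cup\{0\}$ in effect re-proves Proposition \ref{p:Ascoli-sufficient} in this instance. Your derivation of uncountable $cn$-character from uncountable $cs^\ast$-character (a countable $cn$-network at $0$ is automatically a $cs^\ast$-network at $0$, by the pigeonhole you describe) is a genuine shortcut: the paper proves the $cn$-statement separately by a support count. Both your compactness check and the paper's rest on the same nontrivial fact, namely Proposition \ref{p:boxtopology-bounded} (functionally bounded subsets of the sum have finite support), which you call ``standard'' but which the paper proves, since for $\mathcal{T}_b$ on an uncountable sum it is not textbook material.

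There is, however, one genuine error: you have the comparison of the two topologies backwards. On the direct sum $\mathcal{T}_b\subseteq\mathcal{T}_{lc}$, strictly so when $\kappa$ is uncountable (this is the paper's $\pmb{\tau}_\kappa\subsetneq\pmb{\nu}_\kappa$; the absolutely convex hull of $\bigcup_i U_i$ is contained in $\prod_i U_i$ for balanced $U_i$, so every box neighborhood is an lc-sum neighborhood). For (P1) this is harmless --- you prove the lc case separately, and with the correct direction that case even subsumes your box pigeonhole --- but your transfer of (P2) from $\mathcal{T}_{lc}$ to $\mathcal{T}_b$ is unjustified: since $\mathcal{T}_b$ is \emph{coarser}, $\overline{B}^{\,\mathcal{T}_b}\supseteq\overline{B}^{\,\mathcal{T}_{lc}}$ and the implication runs the wrong way, so as written item (ii) for the box topology is unproved. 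The fix is short: your witnessing seminorm $p=\sum_k p_{i_k}$ involves only the countably many coordinates of $\supp(B)$, hence factors through the box-continuous projection onto $\bigoplus_k E_{i_k}$, where the box and locally convex sum topologies coincide (countable sum); therefore $p$ is $\mathcal{T}_b$-continuous and $\{p<1/2\}$ is already a box neighborhood missing $B$. The same observation would also legitimize the lc-seminorm in the $\mathcal{T}_b$ half of your argument for (iii), although your separate ``product factor'' construction covers that case anyway. Two minor overstatements: the continuous seminorms of $\mathcal{T}_{lc}$ are not ``exactly'' those of the form $\sum_i p_i(x_i)$ --- they form a directed generating family, which is all you use --- and the ``tension'' you flag at the end between the two topologies partly dissolves once the inclusion is corrected: (P1) need only be proved in the finer topology $\mathcal{T}_{lc}$ and (P2) in the coarser one $\mathcal{T}_b$, and your quadratic normalization $|F|^{-2}$ indeed does both jobs.
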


A systematic study of topological properies of the weak topology of Banach spaces was proposed by Corson in \cite{Corson}. Schl\"{u}chtermann and Wheeler \cite{S-W} showed that an infinite-dimensional Banach space in the weak topology is never a $k$-space. Let us also recall that the famous Kaplansky theorem states that a normed  space $E$ in the weak topology has countable tightness; for a generalization of this result see \cite{GKZ}.  We shall say that a locally convex space $E$  is {\em weakly Ascoli} if $E$ endowed with the weak topology $\sigma(E,E')$ is an Ascoli space (where $E'$ denotes the topological dual space of $E$). In \cite{GKP} we obtained a characterization of weakly Ascoli Banach spaces.
\begin{theorem}[\cite{GKP}] \label{t:Ascoli-weak-Banach}
A Banach space $E$ is weakly Ascoli if and only if it is finite-dimensional.
\end{theorem}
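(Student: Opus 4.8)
The plan is to prove the substantive implication---that an infinite-dimensional Banach space $E$ is \emph{not} weakly Ascoli---by contradiction, the reverse implication being immediate: if $\dim E<\infty$ then $\sigma(E,E')$ coincides with the norm topology, which is metrizable and hence Ascoli by the diagram in the Introduction. So I would assume $\dim E=\infty$ and that $(E,\sigma(E,E'))$ is Ascoli, and seek a contradiction.

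I would work through the characterization recalled from \cite{BG}: $X$ is Ascoli precisely when the canonical evaluation map $X\to\CC(\CC(X))$ is an embedding. Applied to $X=(E,\sigma(E,E'))$, this means the topology on $E$ generated by the seminorms $x\mapsto\sup_{f\in K}|f(x)|$, with $K$ ranging over the \emph{compact} subsets of $\CC\big((E,\sigma(E,E'))\big)$, must agree with $\sigma(E,E')$ itself; equivalently, every such compact $K$ must be evenly continuous at $0$. To manufacture a violation I would invoke the Josefson--Nissenzweig theorem to fix a sequence $(f_n)\subseteq E'$ with $\|f_n\|=1$ and $f_n\to 0$ in $\sigma(E',E)$, together with a weakly null net in $E$ along which $\sup_n|f_n(\cdot)|$ stays bounded away from $0$ (in $\ell^2$ the normalized unit vectors $e_n$ play both roles, since $f_n(e_n)=1$ while $e_n\to 0$ weakly). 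Since every weak neighbourhood of $0$ is unbounded in norm, this already shows that $\{f_n\}$ fails to be evenly continuous at $0$.

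The main obstacle is that $\{f_n\}\cup\{0\}$ need not be \emph{compact} in $\CC\big((E,\sigma(E,E'))\big)$, and in general it is not: restricting to the weakly compact set $C=\{e_n\}\cup\{0\}$ produced by the same sequence, the functions $f_n|_C$ are $1$-separated in the sup-norm of $C(C)$, so no subsequence converges uniformly on $C$. Thus a bare sequence of functionals cannot serve as the required compact witness, and, more tellingly, by Kaplansky's theorem $(E,\sigma(E,E'))$ has countable tightness, so no \emph{sequence}-based obstruction can detect the failure of the Ascoli property at all. The real work is therefore to assemble a genuinely non-sequential compact subset of $\CC\big((E,\sigma(E,E'))\big)$ that is not evenly continuous: I would replace the raw sequence by an ultrafilter/accumulation limit of suitably spread-out, finitely supported bump functions built from the $f_n$, arranged so that the family restricts to an equicontinuous (hence relatively compact) set on each weakly compact $C\subseteq E$---exploiting the good tail behaviour of weakly compact sets---while a weakly null net still keeps the supremum bounded below. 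Verifying compactness of this limit family is the crux of the argument.

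A cleaner alternative, which is the route the present paper's machinery supplies, is to note that a Banach space is barrelled and \emph{a fortiori} $c_0$-barrelled; by the paper's general result a weakly Ascoli $c_0$-barrelled space is linearly homeomorphic, in its weak topology, to a dense subspace of some $\IR^\Gamma$. An infinite-dimensional normed space cannot carry such a topology---as recorded in the Fr\'echet corollary, weak-Ascoli forces $E=\IR^N$, which is non-normable once $N\geq\w$---so $E$ must be finite-dimensional. I expect the self-contained compact-set construction above to be more delicate than this structural deduction, but also more illuminating of \emph{why} infinite dimensionality is the obstruction.
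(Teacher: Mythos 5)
Your closing structural argument is correct and is precisely the paper's own derivation of this theorem from its new results: a Banach space is barrelled (hence $c_0$-barrelled) and complete, so Theorem \ref{t:Ascoli-c0-bar} and Corollary \ref{c:Ascoli-dual-Frechet} force $E=\IR^\Gamma$, which is normable only when $\Gamma$ is finite. This is non-circular, since the proofs of Theorem \ref{t:Ascoli-c0-bar} and Corollary \ref{c:Ascoli-dual-Frechet} in Section 4 nowhere use the present theorem, and the paper itself notes that Corollary \ref{c:Ascoli-dual-Frechet} generalizes Theorem \ref{t:Ascoli-weak-Banach}; the finite-dimensional direction is also fine.

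What is wrong is your stated reason for abandoning the direct sequential attack, and hence the entire ultrafilter programme whose ``crux'' you leave unproved. The claim that ``by Kaplansky's theorem $(E,\sigma(E,E'))$ has countable tightness, so no sequence-based obstruction can detect the failure of the Ascoli property'' is a non sequitur, and it is false: the paper's proof of Theorem \ref{t:Ascoli-c0-bar}(i) is exactly a sequence-based obstruction. Take any linearly independent sequence $\{y_n\}_{n\in\w}$ in the unit sphere of $E'$ (Josefson--Nissenzweig is not needed; only weak$^*$ boundedness of the sequence is used). Since weakly compact subsets of $E$ are norm bounded, $\sup_{x\in C}|(1/n^2)y_n(x)|\le \sup_{x\in C}\|x\|/n^2\to 0$ for every weakly compact $C\subseteq E$, so $(1/n^2)y_n\to 0$ in $\CC(E_w)$ and $\KK:=\{0\}\cup\{(1/n^2)y_n\}_{n\in\w}$ is compact there. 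Your obstacle --- that the unit-norm functionals are $1$-separated on the weakly compact set $\{e_n\}\cup\{0\}$ --- afflicts only the unscaled sequence; scaling restores compactness while the failure of equicontinuity persists, because for linear functionals that failure is not quantitative: a basic weak neighborhood $U=\{x: |z_i(x)|<\delta,\ i\le m\}$ of $0$ contains the finite-codimensional subspace $V=\bigcap_{i\le m}\ker z_i$, and for any $n$ with $y_n\notin L:=\spn\{z_1,\dots,z_m\}$ (all but at most $m$ indices, by independence) the bipolar identity $L=L^{\perp\perp}$ yields $x\in V$ with $y_n(x)\ne 0$, whence $\sup_{x\in U}|(1/n^2)y_n(x)|=\infty$. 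Thus $\KK$ is a compact subset of $\CC(E_w)$ that is not equicontinuous at $0$, hence not evenly continuous by Lemma \ref{l:Ascoli-free-evenly}(iv), and $E_w$ is not Ascoli. This scaling trick is all that was missing from your first approach; countable (fan) tightness enters only in the different proof given in \cite{GKP}, as a tool for establishing non-Ascoliness there, not as an obstruction to sequential witnesses.
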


Taking into account that every Banach space $E$ is Baire and hence $E$ is barrelled and bornological, Theorem \ref{t:Ascoli-weak-Banach} motivates the following question: Which barrelled (bornological, Baire etc.) locally convex spaces are weakly Ascoli?
In Theorem \ref{t:Ascoli-c0-bar} below we obtain a necessary condition to be weakly Ascoli in a more general class of $c_0$-barrelled spaces.
Recall that a locally convex space $X$ is called {\em $c_0$-barrelled} if every $\sigma(X',X)$-null sequence in $X'$ is equicontinuous. Every barrelled space and hence every Fr\'{e}chet space is $c_0$-barrelled.
\begin{theorem} \label{t:Ascoli-c0-bar}
If a $c_0$-barrelled space $X$ is weakly Ascoli, then:
\begin{enumerate}
\item[{\rm (i)}] every $\sigma(X',X)$-bounded subset of $X'$ is finite-dimensional;
\item[{\rm (ii)}] the topology $\tau$ of $X$ coincides with the weak topology;
\item[{\rm (iii)}] $X$ is linearly isomorphic to a dense subspace of $\IR^\Gamma$, where $\Gamma$ is a Hamel base of $X'$.
\end{enumerate}
\end{theorem}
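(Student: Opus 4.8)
The plan is to deduce (ii) and (iii) formally from (i), so that all the real content lies in (i). For (ii): once every $\sigma(X',X)$-bounded subset of $X'$ is finite-dimensional, I note that every $\tau$-equicontinuous set is $\sigma(X',X)$-bounded, hence finite-dimensional; running over a base of closed absolutely convex $\tau$-neighbourhoods $U$ of $0$, each polar $U^{\circ}$ is then finite-dimensional and $\sigma(X',X)$-compact, so by the bipolar theorem $U=U^{\circ\circ}$ contains a set of the form $\{x:|f_i(x)|\le\delta,\ i\le k\}$ with $f_1,\dots,f_k$ a basis of $\spn U^{\circ}$; this is a $\sigma(X,X')$-neighbourhood, whence $\tau\subseteq\sigma(X,X')$ and therefore $\tau=\sigma(X,X')$. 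For (iii): with $\Gamma$ a Hamel base of $X'$, the map $T:X\to\IR^{\Gamma}$, $T(x)=(\gamma(x))_{\gamma\in\Gamma}$, is a continuous linear injection, and since $\sigma(X,X')$ is the initial topology induced by $\Gamma$ (every $f\in X'$ being a finite combination of elements of $\Gamma$) it is a linear embedding; density of $T(X)$ follows because any functional $\sum_{\gamma\in F}a_\gamma\pi_\gamma\in(\IR^\Gamma)'$ vanishing on $T(X)$ forces $\sum a_\gamma\gamma=0$ in $X'$, hence all $a_\gamma=0$ by independence of $\Gamma$, so the annihilator of $T(X)$ is trivial.

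For (i) I would argue by contradiction, assuming that some $\sigma(X',X)$-bounded $B\subseteq X'$ spans an infinite-dimensional subspace, and then build a compact subset of $\CC\big((X,\sigma(X,X'))\big)$ that is not evenly continuous. First I would choose a linearly independent sequence $(f_n)_{n\in\w}$ in $B$. Since $B$ is $\sigma(X',X)$-bounded, the sequence $(f_n/n)_n$ is $\sigma(X',X)$-null, so by $c_0$-barrelledness it is equicontinuous and thus lies in the polar $U^{\circ}$ of some closed absolutely convex $\tau$-neighbourhood $U$ of $0$; consequently $|f_n(x)/n|\le\mu_U(x)$ for all $x$, where $\mu_U$ is the continuous Minkowski functional of $U$. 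Now set $h_n:=f_n/n^2\in U^{\circ}$ and put $\KK:=\{h_n:n\in\w\}\cup\{0\}$. The decisive point is that $\KK$ is compact in $\CC\big((X,\sigma(X,X'))\big)$: for a weakly compact (hence bounded) set $L$ the continuous seminorm $\mu_U$ is bounded on $L$, say by $R_L$, so $\sup_{x\in L}|h_n(x)|\le\frac1n\sup_{x\in L}\mu_U(x)\le R_L/n\to0$, i.e. $h_n\to0$ uniformly on weakly compact sets, and $\KK$ is a convergent sequence together with its limit.

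Since $(X,\sigma(X,X'))$ is Ascoli, $\KK$ must be evenly continuous, and I would contradict this at the point $0$ with target neighbourhood $(-1,1)$ of the common value $h(0)=0$. Even continuity would furnish a weak neighbourhood $V$ of $0$ with $|h(x)|\le1$ for all $h\in\KK$ and all $x\in V$ (the side condition $h(0)\in O$ being automatic, as every $h\in\KK$ is linear). Passing to a basic weak neighbourhood $\{x:|\ell_j(x)|\le\delta,\ j\le m\}\subseteq V$ and using that $(f_n)$ is linearly independent, I would pick $n$ with $f_n\notin\spn\{\ell_1,\dots,\ell_m\}$; then $\bigcap_j\ker\ell_j\not\subseteq\ker f_n$, so there is $x\in\bigcap_j\ker\ell_j\subseteq V$ which, after rescaling inside this subspace, satisfies $|h_n(x)|=|f_n(x)|/n^2=2$, contradicting $|h_n(x)|\le1$. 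This proves (i). I expect the main obstacle to be exactly the compactness of $\KK$: weakly compact sets are in general not $\tau$-precompact, so equicontinuity alone does not upgrade pointwise convergence to uniform convergence on weakly compact sets; the remedy is to use $c_0$-barrelledness to trap $(f_n/n)$ in a single polar $U^{\circ}$ and then insert the extra factor $1/n$, so that boundedness of $\mu_U$ on the bounded set $L$ supplies the uniform convergence.
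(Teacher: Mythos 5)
Your proposal is correct and takes essentially the same route as the paper's proof: for (i) the paper likewise picks an independent $\sigma(X',X)$-bounded sequence $(y_n)$, uses $c_0$-barrelledness to make $\{y_n/n\}$ equicontinuous, shows $\{y_n/n^2\}\cup\{0\}$ is compact in $\CC(X_w)$ (by a polar-scaling computation equivalent to your Minkowski-functional estimate), and then violates equicontinuity at $0$ by evaluating on $\bigcap_j \ker \ell_j = L^{\perp}$ and rescaling, exactly as you do. Your derivation of (ii) via the bipolar theorem and of (iii) via the standard dense embedding into $\IR^{\Gamma}$ differ only cosmetically from the paper's (which builds the finite set $F$ with $F^{\circ}\subseteq A^{\circ}\subseteq U$ by an explicit direct-sum decomposition of $X'_{w^\ast}$ and cites the $\IR^{\Gamma}$ fact as well known).
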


If a $c_0$-barrelled space is  complete we obtain
\begin{corollary} \label{c:Ascoli-dual-Frechet}
A complete $c_0$-barrelled space $X$ is weakly Ascoli  if and only if $X=\IR^\Gamma$ for some $\Gamma$. In particular, a Fr\'{e}chet space $E$ is weakly Ascoli if and only if $E=\IR^N$ for some $N\leq\w$.
\end{corollary}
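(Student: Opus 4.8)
The plan is to derive the whole statement from Theorem \ref{t:Ascoli-c0-bar} together with two elementary facts about the spaces $\IR^\Gamma$, so almost no new work is needed beyond organizing the implications.

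First I would treat the forward implication of the main equivalence. Assume $X$ is complete, $c_0$-barrelled and weakly Ascoli. By Theorem \ref{t:Ascoli-c0-bar}(ii)--(iii) the original topology of $X$ is already the weak one, and there is a topological linear isomorphism of $X$ onto a dense linear subspace $Y$ of $\IR^\Gamma$ (product topology), where $\Gamma$ is a Hamel base of $X'$. Since completeness is a uniform invariant of topological vector spaces and $X$ is complete, $Y$ is complete as well. A complete subspace of the Hausdorff (indeed complete) space $\IR^\Gamma$ is closed; being also dense, $Y$ must coincide with $\IR^\Gamma$, so $X\cong\IR^\Gamma$.

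Next I would prove the backward implication. Suppose $X=\IR^\Gamma$. Then $X$ is complete (a product of complete spaces) and barrelled (a product of barrelled spaces), hence $c_0$-barrelled, so it only remains to check that $X$ is weakly Ascoli. The dual of $\IR^\Gamma$ is the direct sum $\bigoplus_\Gamma\IR$ of coordinate functionals, and the product topology is exactly the topology of pointwise convergence against these functionals; therefore the weak topology $\sigma\big(\IR^\Gamma,(\IR^\Gamma)'\big)$ coincides with the product topology, i.e. with $C_p(\Gamma)$ for the discrete space $\Gamma$. As a discrete space is locally compact and scattered, Theorem \ref{t:Cp-Ascoli-lc}(i) shows that $C_p(\Gamma)=\IR^\Gamma$ is Ascoli, so $X$ is weakly Ascoli.

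Finally, for the Fréchet consequence I would note that every Fréchet space $E$ is complete, metrizable and barrelled (being Baire), hence complete and $c_0$-barrelled, so the first part gives that $E$ is weakly Ascoli iff $E=\IR^\Gamma$. Since all factors are non-trivial, $\IR^\Gamma$ is metrizable exactly when $\Gamma$ is countable; as $E$ is metrizable this forces $|\Gamma|\le\w$, that is $E=\IR^N$ with $N\le\w$, and conversely each such $\IR^N$ is a Fréchet space covered by the first part. The two points I expect to require genuine care—rather than the routine bookkeeping—are the identification of the weak topology of $\IR^\Gamma$ with its product topology (which is precisely what licenses the appeal to Theorem \ref{t:Cp-Ascoli-lc}(i)) and the passage from ``dense and complete'' to ``equal'' in the forward direction, resting on completeness being a topological-linear invariant and complete subsets of a Hausdorff space being closed.
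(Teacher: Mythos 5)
Your proof is correct, and its necessity half is exactly the paper's argument (the paper compresses it to one line: weakly Ascoli plus completeness plus Theorem \ref{t:Ascoli-c0-bar} give $X=\IR^\Gamma$, the completeness being used, as you spell out, to upgrade ``dense subspace of $\IR^\Gamma$'' to equality via ``complete $\Rightarrow$ closed''). The only genuine divergence is in the sufficiency half. The paper notes that for $X=\IR^\Gamma$ one has $X=X_w$ and then cites Noble's classical theorem \cite{Nob} that $\IR^\Gamma$ is a $k_\IR$-space, whence Ascoli by the implication diagram in the introduction; you instead identify $\sigma\big(\IR^\Gamma,(\IR^\Gamma)'\big)$ with the product topology (same first step, since $(\IR^\Gamma)'=\bigoplus_\Gamma\IR$), view $\IR^\Gamma$ as $C_p(\Gamma)$ over the discrete space $\Gamma$, and invoke Theorem \ref{t:Cp-Ascoli-lc}(i): a discrete space is locally compact and scattered, so $C_p(\Gamma)$ is Ascoli. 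Both routes are valid. The paper's buys the stronger conclusion that $\IR^\Gamma$ is even a $k_\IR$-space, at the cost of an external classical citation; yours stays entirely inside results quoted in the paper, though it is slight overkill, since you use a recent characterization of Ascoli spaces $C_p(X)$ in the direction where scatteredness is trivially satisfied. Your extra observations (that $\IR^\Gamma$ is complete and barrelled, and the metrizability argument pinning down $|\Gamma|\leq\w$ in the Fr\'{e}chet case) are routine but correct, and the two points you flag as delicate --- the identification of the weak topology of $\IR^\Gamma$ with the product topology, and the passage from dense-and-complete to equality --- are indeed exactly where the content lies.
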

Note that the last assertion of Corollary \ref{c:Ascoli-dual-Frechet} generalizes Theorem \ref{t:Ascoli-weak-Banach} and Theorem 1.6 of \cite{GKP} by removing the condition on a Fr\'{e}chet space $E$ to be a quojection. The class of $c_0$-barrelled spaces is much wider than the class of barrelled spaces (see Chapter 12 in \cite{Jar}), but in the class of weakly Ascoli locally convex spaces these two classes coincide.
\begin{corollary} \label{c:weak-bar-c0-bar}
Let $X$ be a weakly Ascoli locally convex space. Then $X$ is $c_0$-barrelled if and only if $X$ is barrelled.
\end{corollary}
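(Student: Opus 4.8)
The plan is to derive the statement directly from Theorem~\ref{t:Ascoli-c0-bar}, since one of the two implications is free. Every barrelled locally convex space is $c_0$-barrelled: a $\sigma(X',X)$-null sequence is $\sigma(X',X)$-bounded, and in a barrelled space every $\sigma(X',X)$-bounded subset of $X'$ is equicontinuous (this is the standard polar form of barrelledness), so in particular every null sequence is equicontinuous. This direction needs no hypothesis on $X$ beyond being locally convex, so the whole content of the corollary lies in the converse.

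For the converse I would assume that $X$ is weakly Ascoli and $c_0$-barrelled and show that $X$ is barrelled, i.e.\ that every $\sigma(X',X)$-bounded subset $B$ of $X'$ is equicontinuous. By Theorem~\ref{t:Ascoli-c0-bar}(i) such a $B$ is finite-dimensional, hence contained in the linear span $F$ of finitely many functionals $f_1,\dots,f_n\in X'$. On the finite-dimensional space $F$ all Hausdorff vector topologies coincide, so the $\sigma(X',X)$-bounded set $B$ is bounded in the Euclidean topology of $F$ and therefore lies in $R\cdot\Gamma_0$ for some $R>0$, where $\Gamma_0$ denotes the absolutely convex hull of $\{f_1,\dots,f_n\}$. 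A finite set of continuous functionals is equicontinuous, and the absolutely convex hull (and any scalar multiple) of an equicontinuous set is again equicontinuous; hence $\Gamma_0$, and with it $B$, is equicontinuous. Thus every $\sigma(X',X)$-bounded subset of $X'$ is equicontinuous, which is precisely barrelledness of $X$.

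The argument uses essentially nothing beyond clause (i) of Theorem~\ref{t:Ascoli-c0-bar}; clause (ii), identifying the topology of $X$ with its weak topology, could be invoked to phrase the equicontinuity purely in $\sigma(X,X')$, but it is not needed for the conclusion. Accordingly I do not expect any genuine obstacle at this stage: the only nonformal ingredient is the elementary observation that a bounded subset of a finite-dimensional subspace of $X'$ sits inside the absolutely convex hull of finitely many continuous functionals and is therefore equicontinuous. All of the real difficulty has already been absorbed into Theorem~\ref{t:Ascoli-c0-bar}, whose proof must furnish the finite-dimensionality of $\sigma(X',X)$-bounded sets for $c_0$-barrelled weakly Ascoli spaces.
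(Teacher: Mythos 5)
Your proof is correct, and it takes a genuinely shorter route than the paper. The paper deduces the corollary from \emph{both} clauses (i) and (ii) of Theorem~\ref{t:Ascoli-c0-bar}: it first identifies $X$ with $X_w$ via clause (ii), and then invokes a separate characterization of weakly barrelled spaces (Theorem~\ref{t:weak-barrelled}: $E_w$ is barrelled iff every bounded subset of $E'_{w^\ast}$ is finite-dimensional), whose nontrivial implication (ii)$\Rightarrow$(i) is proved there using the complemented-subspace Proposition~\ref{p:Finite-direct-summund} and the duality $(E'_{w^\ast})'=E$. You instead inline exactly that implication, and in a simpler form: because your target is equicontinuity with respect to the \emph{original} topology $\tau$ of $X$ rather than with respect to $\sigma(X,X')$, you can bypass the decomposition $E'_{w^\ast}=L\oplus M$ and the duality argument entirely --- a $\sigma(X',X)$-bounded set is finite-dimensional by clause (i), hence (the topology $\sigma(X',X)$ being Hausdorff on the finite-dimensional span, since $X$ separates points of $X'$) Euclidean-bounded, hence absorbed by a scalar multiple of the absolutely convex hull of finitely many $\tau$-continuous functionals, which is equicontinuous since the polar of a set equals the polar of its absolutely convex hull; barrelledness then follows from the standard polar characterization (every $\sigma(X',X)$-bounded subset of $X'$ equicontinuous). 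What the paper's detour buys is Theorem~\ref{t:weak-barrelled} as an independent result, plus the formally stronger conclusion that $X_w$ itself is barrelled (though by clause (ii) the topologies coincide here, so this is no extra information in the end); what your route buys is self-containedness and the explicit observation that clause (i) alone suffices, with clause (ii) not needed. The easy direction (barrelled implies $c_0$-barrelled) is handled the same way in both, and all the elementary steps you use --- boundedness passing to the subspace topology, the cross-polytope absorbing bounded sets, finite sets of continuous functionals being equicontinuous --- check out.
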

We prove this corollary using a characterization of locally convex spaces which are barrelled in the weak topology, see Theorem \ref{t:weak-barrelled}.

One can naturally ask a more concrete question: Which barrelled (bornological, Baire etc.) spaces $C_p(X)$ and $\CC(X)$ are weakly Ascoli? The next theorem shows that the answers to this question for $C_p(X)$ and $\CC(X)$ are the same.
\begin{theorem} \label{t:Ck-bar-weakly-Ascoli}
For a Tychonoff space $X$ the following assertions are equivalent:
\begin{enumerate}
\item[{\rm (i)}] $\CC(X)$ is a barrelled weakly Ascoli space;
\item[{\rm (ii)}] $C_p(X)$ is a barrelled Ascoli space.
\end{enumerate}
\end{theorem}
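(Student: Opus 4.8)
The plan is to reduce both conditions to the single topological statement that every compact subset of $X$ is finite. The compact-open and the pointwise topologies on $C(X)$ coincide precisely when every compact subset of $X$ is finite, and in that case $\CC(X)=C_p(X)$ as locally convex spaces, so the two spaces have the same topological dual. Since the dual of $C_p(X)$ is the space $L(X)$ of finitely supported measures and the pointwise topology on $C(X)$ is exactly $\sigma\big(C(X),L(X)\big)$, the space $C_p(X)$ always coincides with its own weak topology; hence for $C_p(X)$ the Ascoli and weakly Ascoli properties are identical. Thus, once we know that all compact subsets of $X$ are finite, assertions (i) and (ii) literally become the same pair of statements about the single space $\CC(X)=C_p(X)$, and the theorem follows. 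The whole argument therefore hinges on showing, in each direction, that the hypotheses force every compact subset of $X$ to be finite.

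For (i)$\Rightarrow$(ii), I would apply Theorem~\ref{t:Ascoli-c0-bar} to the space $\CC(X)$, which is $c_0$-barrelled (being barrelled) and weakly Ascoli by assumption. Part~(i) of that theorem yields that every $\sigma\big(\CC(X)',C(X)\big)$-bounded subset of the dual $\CC(X)'$ spans a finite-dimensional subspace. Now fix a compact set $K\subseteq X$ and consider the evaluation functionals $\{\delta_x:x\in K\}\subseteq\CC(X)'$ (each $\delta_x$ is continuous for the coarser pointwise topology, hence for the compact-open topology). For every $f\in C(X)$ one has $\sup_{x\in K}\abs{\delta_x(f)}=\sup_{x\in K}\abs{f(x)}<\infty$, so this set is $\sigma\big(\CC(X)',C(X)\big)$-bounded; as evaluations at distinct points are linearly independent (separate them by a Tychonoff function), the finite-dimensionality of their span forces $K$ to be finite. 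Hence every compact subset of $X$ is finite, $\CC(X)=C_p(X)$, and by the reduction above the barrelled weakly Ascoli space $\CC(X)$ \emph{is} the barrelled Ascoli space $C_p(X)$.

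For (ii)$\Rightarrow$(i), the same device applies, now using barrelledness of $C_p(X)$ directly: a locally convex space is barrelled iff every weak$^*$-bounded subset of its dual is equicontinuous, and equicontinuous subsets of $L(X)=C_p(X)'$ are contained in the (finite-dimensional) polar of a basic pointwise neighborhood. Since $\{\delta_x:x\in K\}$ is weak$^*$-bounded for every compact $K$, it is equicontinuous, hence lies in a finite-dimensional subspace, so $K$ is finite; alternatively one may simply invoke the Buchwalter--Schmets characterization that $C_p(X)$ is barrelled iff every functionally bounded subset of $X$ is finite, together with the fact that compact sets are functionally bounded. Again $\CC(X)=C_p(X)$, the barrelledness transfers verbatim, and the Ascoli property of $C_p(X)$ is exactly the weak-Ascoli property of $\CC(X)$.

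The remaining bookkeeping---that $\CC(X)=C_p(X)$ as locally convex spaces forces the two duals to coincide, and that the resulting weak topology equals the pointwise topology---is immediate from the definitions. I expect the only substantive point to be the evaluation-functional argument in the first direction: it is what converts the purely algebraic finite-dimensionality supplied by Theorem~\ref{t:Ascoli-c0-bar}(i) into the topological finiteness of compacta, and one must verify both that distinct evaluations are linearly independent and that $\{\delta_x:x\in K\}$ is genuinely weak$^*$-bounded. It is worth noting that the Nachbin--Shirota characterization of barrelledness of $\CC(X)$ is not needed anywhere in this argument.
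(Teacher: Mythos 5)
Your proof is correct, and in the key direction it takes a genuinely different route from the paper. Both arguments share the same skeleton: reduce everything to the statement that every compact subset of $X$ is finite, which forces $\tau_k=\tau_p$, and then use the fact that $C_p(X)$ always carries its weak topology (its dual being the span of the Dirac functionals), so that the Ascoli and weakly Ascoli properties coincide for $C_p(X)$ and assertions (i) and (ii) collapse into one. Where you diverge is in how (i)$\Rightarrow$(ii) extracts finiteness of compacta: the paper applies part (ii) of Theorem \ref{t:Ascoli-c0-bar} (the compact-open topology coincides with the weak topology) and then invokes Proposition \ref{p:Ascoli-Cp-Ck}, whose proof requires identifying $\CC(X)'$ with the space $M_c(X)$ of compactly supported regular Borel measures, extracting a discrete sequence with disjoint neighborhoods inside an infinite compactum, the Urysohn extension theorem, and Lemma \ref{l:Ascoli-weak-dual}. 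You instead apply part (i) of Theorem \ref{t:Ascoli-c0-bar} and observe that for compact $K\subseteq X$ the set $\{\delta_x : x\in K\}$ is weak${}^\ast$-bounded and consists of linearly independent functionals, so its finite-dimensional span forces $K$ to be finite. This bypasses Proposition \ref{p:Ascoli-Cp-Ck} and all of its machinery, making your argument more elementary and self-contained for this particular theorem; what the paper's route buys is that Proposition \ref{p:Ascoli-Cp-Ck} is a standalone characterization (when does $\tau_k=\tau_w$ hold?) of independent interest, which the paper wished to record anyway. The direction (ii)$\Rightarrow$(i) is essentially identical in both treatments, resting on the Buchwalter--Schmets theorem (your alternative equicontinuity argument via polars of basic pointwise neighborhoods is also fine and avoids citing it).
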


Theorem \ref{t:Ck-bar-weakly-Ascoli}, the Nachbin--Shirota theorem and the Buchwalter--Schmets theorem imply
\begin{corollary} \label{c:Ck-weakly-Acoli}
If $X$ is a $\mu$-space and a $k$-space (for example, metrizable), then $\CC(X)$ is weakly Ascoli if and only if $X$ is discrete.
\end{corollary}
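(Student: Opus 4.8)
The plan is to derive Corollary~\ref{c:Ck-weakly-Acoli} as a formal consequence of Theorem~\ref{t:Ck-bar-weakly-Ascoli} together with the two classical characterizations of barrelledness invoked in the hypotheses, namely the Nachbin--Shirota theorem (which describes when $C_k(X)$ is barrelled) and the Buchwalter--Schmets theorem (which describes when $C_p(X)$ is barrelled). First I would recall the precise statements of these theorems: the Nachbin--Shirota theorem asserts that $\CC(X)$ is barrelled if and only if $X$ is a $\mu$-space in which every bounding subset is relatively compact (equivalently, every functionally bounded set is relatively compact), while the Buchwalter--Schmets theorem asserts that $C_p(X)$ is barrelled if and only if every functionally bounded subset of $X$ is finite. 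Under the standing hypothesis that $X$ is a $\mu$-space, the $\mu$-space condition in Nachbin--Shirota is automatic, so barrelledness of $\CC(X)$ reduces to the relative compactness of functionally bounded sets.

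The core of the argument is to show that, for a $\mu$-space $X$ that is also a $k$-space, discreteness is equivalent to the conjunction ``$\CC(X)$ is barrelled and $C_p(X)$ is barrelled and Ascoli.'' The forward direction is routine: if $X$ is discrete then $\CC(X)=C_p(X)=\IR^X$, which is a product of copies of $\IR$, hence barrelled; moreover $\IR^X$ is a $k$-space (indeed metrizable when $X$ is countable, and in general a product of $\IR$'s is known to be Ascoli as it is even a $k_\IR$-space), so $C_p(X)$ is Ascoli, and one checks the weak Ascoli property of $\CC(X)$ directly or via Theorem~\ref{t:Ck-bar-weakly-Ascoli}. For the converse I would argue as follows. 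Suppose $\CC(X)$ is weakly Ascoli. Since $X$ is a $k$-space, it is in particular a $k_\IR$-space, and hence $\CC(X)$ is barrelled precisely when $X$ is a $\mu$-space with functionally bounded sets relatively compact; but the $\mu$-space hypothesis is given, and I would use the $k$-space property to upgrade ``functionally bounded'' control to genuine compactness. Now apply the equivalence (i)$\Leftrightarrow$(ii) of Theorem~\ref{t:Ck-bar-weakly-Ascoli}: weak Ascoliness of the barrelled space $\CC(X)$ forces $C_p(X)$ to be a barrelled Ascoli space. By Buchwalter--Schmets, barrelledness of $C_p(X)$ means every functionally bounded subset of $X$ is finite.

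The final and decisive step is to convert ``every functionally bounded subset of $X$ is finite'' into discreteness, using that $X$ is a $\mu$-space and a $k$-space. The key observation is that in a $k$-space every compact set is, by definition, determining for the topology, and a finite functionally bounded set means in particular that every \emph{compact} subset of $X$ is finite (since compact sets are functionally bounded). A $k$-space all of whose compact subsets are finite must carry the discrete topology: indeed, if $A\subseteq X$ is any subset, then $A\cap K$ is finite, hence closed, for every compact $K$, so $A$ is closed by the $k$-space property; since every subset is closed, $X$ is discrete. I expect this last implication to be the only genuinely nontrivial point, and its content is exactly the interplay between the $k$-space axiom and finiteness of compact sets; everything else is bookkeeping across the three cited results.
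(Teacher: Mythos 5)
Your proposal is correct and follows essentially the same route as the paper's proof: the Nachbin--Shirota theorem gives barrelledness of $\CC(X)$ from the $\mu$-space hypothesis, Theorem~\ref{t:Ck-bar-weakly-Ascoli} transfers the weak Ascoli property to barrelledness of $C_p(X)$, the Buchwalter--Schmets theorem makes all functionally bounded subsets of $X$ finite, the $k$-space property then forces discreteness (your explicit argument that finite intersections with compact sets make every subset closed is exactly the content the paper leaves implicit), and the converse goes through $\CC(X)=\IR^X$ carrying the weak topology and being a $k_\IR$-space by Noble's theorem. One minor slip: $\IR^X$ is \emph{not} a $k$-space when $X$ is uncountable, but your parenthetical fallback to the $k_\IR$-property is precisely what the paper uses, so the argument stands.
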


In the  proof of Theorem \ref{t:Ascoli-weak-Banach} we essentially used the fact (discovered in \cite{GKZ}) that every Banach space $E$ in the weak topology has countable fan tightness. This property introduced in \cite{Arhangel} is stronger than the countable tightness. Since the weak${}^\ast$ dual of a Banach space may not have countable tightness, this fact cannot be used to prove the following theorem which naturally complements Theorem \ref{t:Ascoli-weak-Banach}.
\begin{theorem} \label{t:Ascoli-weak-dual}
The weak${}^\ast$ dual space of a Banach space $E$ is Ascoli if and only if $E$ is finite-dimensional.
\end{theorem}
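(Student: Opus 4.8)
The plan is to prove the nontrivial implication by contraposition: assuming $E$ is infinite-dimensional, I will exhibit a single compact subset of $\CC\big((E',\sigma(E',E))\big)$ that fails to be evenly continuous. The reverse implication is immediate, since if $E$ is finite-dimensional then so is $E'$, the weak$^\ast$ topology coincides with the norm topology, and a metrizable space is a $k$-space, hence Ascoli.

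First I would record two elementary facts about the weak$^\ast$ dual $X:=(E',\sigma(E',E))$. Each $x\in E$ defines a weak$^\ast$-continuous function $\hat x\colon f\mapsto f(x)$ on $X$, so $E$ sits canonically inside $\CC(X)$. Since every weak$^\ast$-compact subset $C\subseteq E'$ is weak$^\ast$-bounded, the Banach--Steinhaus theorem (this is where completeness of $E$ enters) gives $M_C:=\sup_{f\in C}\|f\|<\infty$, whence $\sup_{f\in C}\tfrac1n|f(x_n)|\le \tfrac{M_C}{n}$ for any normalized sequence $(x_n)$. The second and crucial fact is that every weak$^\ast$-neighbourhood of $0$ in $X$ contains a subspace of finite codimension: a basic neighbourhood $\{f:|f(y_i)|<\delta,\ i\le k\}$ contains the annihilator $F^\perp$ of the finite set $F=\{y_1,\dots,y_k\}$, which is a linear subspace with ${}^\perp(F^\perp)=\spn F$.

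Now I would choose a normalized, linearly independent sequence $(x_n)_{n\ge1}$ in $E$ (available since $\dim E=\infty$) and set
\[
K:=\{0\}\cup\Big\{\tfrac1n\,\hat x_n : n\ge1\Big\}\subseteq \CC(X).
\]
By the first fact $\tfrac1n\hat x_n\to 0$ uniformly on every weak$^\ast$-compact set, so $K$ is compact in $\CC(X)$. To see that $K$ is not evenly continuous, I test even continuity at the domain point $0\in X$, at the common range value $0=\hat x_n(0)$, with target neighbourhood $(-1,1)$. Even continuity would supply a weak$^\ast$-neighbourhood $U$ of $0$ and a neighbourhood $V$ of $0\in\IR$ so that $g(0)\in V$ forces $g(U)\subseteq(-1,1)$ for all $g\in K$; as $g(0)=0\in V$ automatically, this means $\tfrac1n|f(x_n)|<1$ for all $n$ and all $f\in U$. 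But $U$ contains some finite-codimensional subspace $F^\perp$, and replacing $f\in F^\perp$ by $\lambda f\in F^\perp$ with $|\lambda|\to\infty$ forces $f(x_n)=0$ for every $f\in F^\perp$ and every $n$; hence $x_n\in{}^\perp(F^\perp)=\spn F$ for all $n$, contradicting the infinite-dimensionality of $\spn\{x_n\}$. Thus $K$ witnesses the failure of the Ascoli property, and $E$ must be finite-dimensional.

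I expect the only real subtlety to be the interplay between compactness and even continuity: the division by $n$ is forced on us in order to make $K$ compact in the compact-open topology, yet it must not interfere with the contradiction. The second fact resolves this tension, because $F^\perp$ is a subspace the offending inequality can be rescaled, so the harmless factor $\tfrac1n$ is absorbed and the conclusion $x_n\in\spn F$ survives. This rescaling phenomenon is precisely what makes the weak$^\ast$ topology hostile to even continuity, and it is what replaces the countable-fan-tightness argument that is unavailable here.
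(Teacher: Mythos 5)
Your proof is correct, and it takes a genuinely different route from the paper's. The paper also argues by contradiction from $\dim E=\infty$, and its compact witness is likewise a null sequence in $\CC(E'_{w^\ast})$ whose compactness rests on Banach--Steinhaus (every weak${}^\ast$-compact set lies in some multiple $mB$ of the dual ball), but its functions are \emph{nonlinear}: it takes independent unit vectors $y_n\in E'$, splits $E'_{w^\ast}=L_n\oplus H_n$ off the finite-dimensional subspace $L_n=\spn\{y_0,\dots,y_n\}$ by Proposition \ref{p:Finite-direct-summund}, and sets $f_n:=g_n\circ\pi_n$, where $\pi_n$ is the projection onto $L_n$ and $g_n$ is a bump function vanishing on $\pi_n(nB)$, so that $f_n$ is eventually identically zero on each compact set; non-equicontinuity at $0$ is then proved via Lemma \ref{l:Ascoli-weak-dual}, which produces $0\neq v\in L_{m+1}$ annihilated by the finitely many $z_i\in E$ defining a given weak${}^\ast$ neighbourhood $U$, followed by rescaling: $\lambda v\in U$ for all $\lambda$ while $f_{m+1}(\lambda v)=1$ for large $\lambda$. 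You instead use the \emph{linear} functions $\tfrac1n\hat x_n$ with $x_n\in E$ normalized and independent, and derive the contradiction from homogeneity together with the Hahn--Banach identity ${}^\perp(F^\perp)=\spn F$; it is the same rescaling-inside-a-subspace trick, but applied to linear functions, which makes the complemented-subspace decomposition, the bump functions and Lemma \ref{l:Ascoli-weak-dual} unnecessary (your argument is in effect the weak${}^\ast$-dual mirror of the paper's own proof of Theorem \ref{t:Ascoli-c0-bar}(i), where the compact set $\{0\}\cup\{(1/n^2)y_n\}$ of scaled independent functionals plays exactly this role). What the paper's construction buys is that its $f_n$ are uniformly bounded and vanish on large balls, so no scaling factor is needed for convergence; what yours buys is brevity and transparency: one compact set of evaluation maps, Banach--Steinhaus for compactness, and a two-line duality argument for the failure of even continuity.
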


We can now describe the content of the paper. In Section \ref{sec-1} we discuss different notions of even continuity and show that a Tychonoff space $X$ is Ascoli if and only if each compact subset of $\CC(X)$ is equicontinuous (see Lemma \ref{l:Ascoli-free-evenly}(iv)). Theorem \ref{t:Ascoli-free-map} is proved also in Section \ref{sec-1}, and its application is given. In   Section \ref{sec-2} we prove Theorem \ref{t:Ascoli-directsum-LCS}, and  Theorems \ref{t:Ascoli-c0-bar}, \ref{t:Ck-bar-weakly-Ascoli} and \ref{t:Ascoli-weak-dual}  are proved in the last section.


\section{A characterization of Ascoli spaces} \label{sec-1}


All spaces in the article are assumed to be Tychonoff. The closure of a subset $A$ of a topological space $(X,\tau)$ we denote by $\overline{A}$ or $\overline{A}^{\, \tau}$. Recall that  a subset $A$ of a topological space $X$ is called {\em functionally bounded} in $X$ if every continuous function on $X$ is bounded on $A$.

For Tychonoff spaces $X$ and $Y$ we denote by $C(X,Y)$ the space of all continuous functions from $X$ to $Y$.
Let $\Tt$ be a {\em directed family} of functionally bounded subsets of $X$ (i.e., if $A,B\in \Tt$ then there is $C\in\Tt$ such that $A\cup B\subseteq C$) containing all finite subsets. Denote by $\tau_\Tt$ the set-open topology on $C(X,Y)$ generated by $\Tt$. The subbase of $\tau_\Tt$ consists of the sets
\[
[A;U]=\{ f\in C(X,Y): \; f(A)\subseteq U\},
\]
where $A\in \Tt$ and $U$ is an open subset of $Y$. The space $C(X,Y)$ with the topology $\tau_\Tt$ is denoted by $C_\Tt(X,Y)$. If $\Tt$ is the family of all finite subsets or the family of all compact subsets of $X$ we obtain the pointwise topology $\tau_p$ and the compact-open topology $\tau_k$, respectively, and write $C_p(X,Y)$ and $\CC(X,Y)$. Clearly, $\tau_p \leq \tau_\Tt$. Denote by
\[
\psi: X\times C_\Tt(X,Y) \to Y, \quad \psi(x,f):=f(x),
 \]
the valuation map.

Recall that a subset $Z$ of $C(X,Y)$ is called {\em evenly continuous} if for each $x\in X$, each $y\in Y$ and each neighborhood $O_y$ of $y$ there is a neighborhood $O_x$ of $x$ and a neighborhood $V_y$ of $y$ such that $f(O_x)\subseteq O_y$ whenever $f(x)\in V_y$.
Below we introduce a weaker notion than even continuity.

\begin{definition} {\em
A  subset $\KK$ of $C(X,Y)$ is called {\em $\tau_\Tt$-evenly continuous} or {\em evenly continuous in $C_\Tt(X,Y)$} if the restriction of the valuation map $\psi$ onto $X\times \KK$ is jointly continuous, i.e.,  for any $x\in X$, each $f\in\KK$  and every neighborhood $O_{f(x)}$ of $f(x)$ there exists a $\tau_\Tt$-neighborhood $U_f\subseteq \KK$ of $f$ and a neighborhood $O_x\subseteq X$ of $x$ such that
\[
\{g(y):g\in U_f,\;y\in O_x\}\subseteq O_{f(x)}.
\] }
\end{definition}

So the notion of  $\tau_\Tt$-even continuity depends on the topology  $\tau_\Tt$ on $C(X,Y)$.
Lemma 3.4.18 of \cite{Eng} states that every evenly continuous subset of $C(X,Y)$ is $\tau_p$-evenly continuous. Since $\tau_p \leq \tau_k$, every $\tau_p$-evenly continuous subset of $C(X,Y)$ is also $\tau_k$-evenly continuous.

If $Y=\IR$ we write simply $C(X)$, $C_\Tt(X)$, $C_p(X)$ or $\CC(X)$, respectively. Then the family
\[
\big\{ [A,\e]: \, A\in \Tt, \, \e>0\big\}, \quad \mbox{ where } [A,\e]:=\{ f\in C(X): f(A)\subseteq (-\e,\e)\},
\]
is a base of the locally convex topology $\tau_\Tt$.

Recall that a subset $Z$ of $C(X)$ is called {\em equicontinuous at a point $x\in X$}  if for every $\e >0$ there is a neighborhood $O_x$ of $x$ such that $|f(y)-f(x)|<\e$ for every $y\in O_x$ and each $f\in Z$; $Z$ is  {\em equicontinuous } if it is equicontinuous at each point $x\in X$. It is clear that every  equicontinuous subset of $C(X)$ is evenly continuous. This fact and the discussion above imply the following diagram
\[
\xymatrix{
\mbox{equicontinuity} \ar@{=>}[r] & {\mbox{even}\atop\mbox{continuity}} \ar@{=>}[r] &  {\mbox{$\tau_p$-even}\atop\mbox{continuity}} \ar@{=>}[r] & {\mbox{$\tau_k$-even}\atop\mbox{continuity}} . }
\]
It can be shown that none of these implications is reversible even for a Banach space $C(X)$.

We defined in \cite{BG} a Tychonoff space $X$ to be {\em Ascoli} if every compact subset $\KK\in \CC(X)$ is $\tau_k$-evenly continuous. However in this definition the $\tau_k$-even continuity can be replaced by equicontinuity as (iv) of the following lemma shows.
\begin{lemma} \label{l:Ascoli-free-evenly}
Let $X$ be  a Tychonoff space and let $\Tt$ and $\Tt'$ be directed families of functionally bounded subsets of $X$ containing all finite subsets of $X$. Then:
\begin{enumerate}
\item[{\rm (i)}] every equicontinuous subset $\KK$ of $C(X)$ is $\tau_\Tt$-evenly continuous;
\item[{\rm (ii)}] every $\tau_\Tt$-evenly continuous and $\tau_\Tt$-compact subset $\KK$ of $C(X)$ is equicontinuous;
\item[{\rm (iii)}] if $\Tt\leq \Tt'$, then every  $\tau_\Tt$-evenly continuous subset of $C(X)$ is $\tau_{\Tt'}$-evenly continuous;
\item[{\rm (iv)}] a compact subset $\KK$ of $\CC(X)$ is equicontinuous  if and only if  $\KK$  is evenly continuous  if and only if $\KK$ is $\tau_p$-evenly continuous if and only if  $\KK$ is $\tau_k$-evenly continuous;
\item[{\rm (v)}] a compact subset $\KK$ of $C_p(X)$ is equicontinuous  if and only if  $\KK$  is evenly continuous  if and only if $\KK$ is $\tau_p$-evenly continuous.
\item[{\rm (vi)}]  the canonical map $\delta: X\to \CC\big(C_\Tt(X)\big)$, $\delta(x)(f):=f(x)$, is continuous if and only if every $\tau_\Tt$-compact subset of $C(X)$ is equicontinuous.
\end{enumerate}
\end{lemma}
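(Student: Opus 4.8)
The plan is to reduce the whole lemma to its two analytic cores, namely (i) and (ii), both of which are direct $\e$-estimates; parts (iii)--(vi) then follow formally. Throughout I would use repeatedly that $\tau_p\leq\tau_\Tt$, so that singletons (being finite, hence in $\Tt$) furnish legitimate $\tau_\Tt$-basic neighborhoods of the form $[\{x\},\e]$, and in particular each point evaluation $f\mapsto f(x)$ is $\tau_\Tt$-continuous.

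For (i), fix $x\in X$, $f\in\KK$ and $\e>0$; it suffices to treat $O_{f(x)}=(f(x)-\e,f(x)+\e)$. Using equicontinuity of $\KK$ at $x$ I would choose a neighborhood $O_x$ of $x$ with $|g(y)-g(x)|<\e/2$ for all $y\in O_x$ and all $g\in\KK$ simultaneously, and set $U_f:=\{g\in\KK:|g(x)-f(x)|<\e/2\}$, which is a $\tau_\Tt$-neighborhood of $f$ in $\KK$ because $\{x\}\in\Tt$. The triangle inequality $|g(y)-f(x)|\leq|g(y)-g(x)|+|g(x)-f(x)|<\e$ then gives $\tau_\Tt$-even continuity.

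For (ii), fix $x\in X$ and $\e>0$. For each $f\in\KK$, $\tau_\Tt$-even continuity applied with $O_{f(x)}=(f(x)-\e/3,f(x)+\e/3)$ yields a $\tau_\Tt$-neighborhood $U_f\subseteq\KK$ of $f$ and a neighborhood $O_x^f$ of $x$ with $g(y)\in(f(x)-\e/3,f(x)+\e/3)$ for all $g\in U_f$, $y\in O_x^f$; taking $y=x$ shows also $|g(x)-f(x)|<\e/3$ on $U_f$. The decisive point is that one needs a single neighborhood of $x$ serving the whole of $\KK$, and this is exactly where $\tau_\Tt$-compactness enters: the family $\{U_f:f\in\KK\}$ covers $\KK$, so finitely many $U_{f_1},\dots,U_{f_n}$ suffice, and I would put $O_x:=\bigcap_{i=1}^n O_x^{f_i}$. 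For arbitrary $g\in\KK$ pick $i$ with $g\in U_{f_i}$; then for $y\in O_x$ the estimate $|g(y)-g(x)|\leq|g(y)-f_i(x)|+|f_i(x)-g(x)|<2\e/3<\e$ gives equicontinuity at $x$.

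The remaining parts are formal consequences. For (iii), $\Tt\leq\Tt'$ gives $\tau_\Tt\leq\tau_{\Tt'}$, so every $\tau_\Tt$-neighborhood is a $\tau_{\Tt'}$-neighborhood and the witnesses $U_f,O_x$ from $\tau_\Tt$-even continuity serve verbatim for $\tau_{\Tt'}$-even continuity. For (iv) I would combine the implication chain recorded in the diagram preceding the lemma, equicontinuity $\Rightarrow$ even continuity $\Rightarrow$ $\tau_p$-even continuity $\Rightarrow$ $\tau_k$-even continuity, with (ii) specialized to $\Tt$ the family of compact sets (so $\tau_\Tt=\tau_k$): since $\KK$ is $\tau_k$-compact, $\tau_k$-even continuity implies equicontinuity, closing the cycle. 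Part (v) is identical with $\Tt$ the family of finite subsets ($\tau_\Tt=\tau_p$). Finally, for (vi) I would note that $\delta(x)$ is $\tau_\Tt$-continuous, being a point evaluation, so $\delta$ indeed maps into $\CC(C_\Tt(X))$, and then simply unwind continuity of $\delta$ at each $x_0$: a basic $\tau_k$-neighborhood of $\delta(x_0)$ is indexed by a $\tau_\Tt$-compact set $\KK\subseteq C_\Tt(X)$ and $\e>0$, and $\delta(O_{x_0})$ lands in it exactly when $\sup_{f\in\KK}|f(x)-f(x_0)|<\e$ for $x\in O_{x_0}$, which is precisely equicontinuity of $\KK$ at $x_0$; quantifying over all $x_0$ and all $\KK$ yields the equivalence. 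The only genuinely delicate bookkeeping is the quantifier management in (ii), where compactness is indispensable, together with the correct matching of compact-open basic neighborhoods with the equicontinuity condition in (vi).
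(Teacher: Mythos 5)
Your proposal is correct, and for parts (i)--(v) it is essentially the paper's own proof: the same $\e$-estimates with the singleton neighborhood $[\{x\},\cdot]$ in (i), the same finite-subcover-plus-intersection argument in (ii) (your $\e/3$ versus the paper's $\e/2$ is immaterial), and the same formal derivation of (iii)--(v) from (ii) and the implication diagram.

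Where you genuinely diverge is the necessity direction of (vi). The paper does \emph{not} unwind continuity of $\delta$ directly there; instead it reduces to (ii) by proving that a $\tau_\Tt$-compact $\KK$ is $\tau_\Tt$-evenly continuous, and to apply continuity of $\delta$ it must manufacture a compact set inside $\KK$ on which a subbasic compact-open set $[\,\cdot\,; O_{f(x)}]$ captures $\delta_x$: it shrinks $O_{f(x)}$ to $\widetilde{O}_{f(x)}$ with $\mathrm{cl}\big(\widetilde{O}_{f(x)}\big)\subseteq O_{f(x)}$, takes $U_f=[\{x\};\widetilde{O}_{f(x)}]\cap\KK$ and the closure $\mathcal{C}=\mathrm{cl}_\KK(U_f)$, and applies continuity of $\delta$ to the subbasic neighborhood $[\mathcal{C};O_{f(x)}]$ of $\delta_x$. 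Your argument instead uses that the compact-open topology on $C\big(C_\Tt(X)\big)$ is the translation-invariant topology of uniform convergence on compacta, so that the sets $\delta_{x_0}+[\KK;\e]$ form a neighborhood base at $\delta_{x_0}$; with that, continuity of $\delta$ at $x_0$ is \emph{literally} the statement that every compact $\KK$ is equicontinuous at $x_0$, and both directions of (vi) become the same symmetric unwinding. This is legitimate --- indeed the paper itself invokes exactly this translated-base description in its proof of the sufficiency direction of (vi) --- so your route loses no generality and is shorter; what the paper's detour buys is that its necessity argument only ever applies continuity of $\delta$ to subbasic open sets $[\mathcal{C};W]$ of the compact-open topology, at the price of the closure trick and an appeal to part (ii).
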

\begin{proof} 
(i) Fix $(x,f)\in X\times \KK$ and $\e>0$. Choose a neighborhood $O_x$ of $x$ such that
\[
|g(y)-g(x)|<\e/2, \quad \forall y\in O_x, \; \forall g\in \KK.
\]
Set $A:=\{ x\}$ and $U_f:= (f+[A,\e/2])\cap \KK$. Now if $h\in  U_f$ and $y\in O_x$, we obtain
\[
| \psi(y,h)-\psi(x,f)|=|h(y)-f(x)| \leq |h(y)-h(x)| + |h(x)-f(x)| <\e/2 +\e/2 =\e.
\]
Thus $\psi$ is continuous at $(x,f)$, and hence $\KK$ is $\tau_\Tt$-evenly continuous.

(ii) Let $x\in X$. Fix $\e>0$. Since $\KK$ is $\tau_\Tt$-evenly continuous, for every $f\in\KK$ and $\e/2$-neighborhood $O_{f(x)}$ of $f(x)$ there exists an open $\tau_\Tt$-neighborhood $U_f\subseteq \KK$ of $f$ and a neighborhood $O_{x,f}\subseteq X$ of $x$ such that
\begin{equation} \label{equ:lemma-Ascoli-equicon}
|g(y)-f(x)|<\e/2, \quad \forall y\in O_{x,f}, \; \forall g\in U_f.
\end{equation}
Since $\KK$ is $\tau_\Tt$-compact, there are $f_1,\dots, f_n\in\KK$ such that $\KK=\bigcup_{i\leq n} U_{f_i}$. Set $O_x := \bigcap_{i\leq n} O_{x,f_i}$. Now, let $y\in O_x$ and $g\in\KK$. Choose $1\leq i\leq n$ such that $g\in U_{f_i}$. Since $y\in O_x \subseteq O_{x,f_i}$ the inequality (\ref{equ:lemma-Ascoli-equicon}) implies
\[
|g(y)-g(x)|\leq |g(y)-f_i(x)| + |f_i(x)-g(x)|<\e.
\]
So $\KK$ is equicontinuous at $x$. Since $x$ is arbitrary, $\KK$ is equicontinuous.

(iii) is trivial, and (iv),(v) follow from (ii) and the diagram before the lemma.

(vi) We shall write $\delta(x):=\delta_x$ for $x\in X$. Assume that $\delta$ is continuous. Let $\KK$ be a compact subset of $C_\Tt(X)$. We have to check that $\KK$ is equicontinuous. To this end, by (ii), it is sufficient to show that $\KK$ is $\tau_\Tt$-evenly continuous. Fix $x\in X$, $f\in\KK$  and an open neighborhood $O_{f(x)}$ of $f(x)$. Choose an open neighborhood $\widetilde{O}_{f(x)}$ of $f(x)$ such that $\mathrm{cl} \big( \widetilde{O}_{f(x)}\big) \subseteq O_{f(x)}$. Let $U_f := \big[ \{ x\}; \widetilde{O}_{f(x)}\big] \cap\KK$ (recall that $\{ x\}\in\Tt$) and set $\mathcal{C}:= \mathrm{cl}_\KK (U_f)$. Then for every $g\in\mathcal{C}$ we have
\[
\delta_x(g)=g(x) \in \mathrm{cl} \big( \widetilde{O}_{f(x)}\big) \subseteq O_{f(x)},
\]
and therefore $\delta_x \in [\mathcal{C}; O_{f(x)}]$. Since $\delta$ is continuous, there is a neighborhood $O_x$ of $x$ such that $\delta(O_x)\subseteq \big[\mathcal{C}; O_{f(x)}\big]$. So for every $y\in O_x$ and each $g\in U_f \subseteq \mathcal{C}$ we have $g(y)=\delta_y(g)\in O_{f(x)}$, which means that $\KK$ is $\tau_\Tt$-evenly continuous.
\smallskip

Conversely, assume that every $\tau_\Tt$-compact subset of $C(X)$ is equicontinuous. We have to show that $\delta$ is continuous at each point $x_0\in X$. Fix a basic neighborhood
\[
[\KK;\e]\subseteq\CC\big( C_\Tt(X)\big)
\]
of zero with $\KK\subseteq C_\Tt(X)$ compact and $\e>0$. Since $\KK$ is equicontinuous there is a neighborhood $O_{x_0}$ of $x_0$ such that
\begin{equation} \label{equ:lemma-Ascoli-equicon-2}
|f(x)-f(x_0)|<\e, \quad \forall x\in O_{x_0}, \; \forall f\in \KK.
\end{equation}
Then, for every $ x\in O_{x_0}$ and each $f\in\KK$, (\ref{equ:lemma-Ascoli-equicon-2}) implies
\[
|\delta_x(f) -\delta_{x_0}(f)|=|f(x)-f(x_0)|<\e, \quad \forall f\in\KK.
\]
Therefore $\delta_x \in \delta_{x_0}+[\KK;\e]$. This means that $\delta_{x}$ is continuous at $x_0$.
\end{proof}

Recall that a subset $A$ of $C(X)$ is called {\em pointwise bounded} if the set $\{ f(x):f\in A\}$ has compact closure in $\IR$ for every $x\in X$.
We shall use the following fact proved in the ``if'' part of the Ascoli theorem \cite[Theorem 3.4.20]{Eng}.
\begin{proposition} \label{p:Ascoli-Free-Ck}
Let $X$ be a Tychonoff space and $\KK$ be an evenly continuous pointwise bounded subset of $C(X)$. Then the $\tau_p$-closure ${\bar A}$ of $A$ is $\tau_k$-compact and evenly continuous.  Moreover, $\tau_k|_{{\bar A}} =\tau_p|_{{\bar A}}$.
\end{proposition}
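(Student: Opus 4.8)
The plan is to realize the $\tau_p$-closure $\overline{\KK}$ inside a compact product, obtain $\tau_p$-compactness essentially for free, and then spend the real effort showing that $\overline{\KK}$ still consists of continuous functions, is evenly continuous, and carries the same compact-open topology. First I would observe that pointwise boundedness means $\KK\subseteq\prod_{x\in X}\overline{\KK(x)}$, where $\KK(x):=\{f(x):f\in\KK\}$ and each factor $\overline{\KK(x)}$ is a compact subset of $\IR$. By Tychonoff's theorem this product is compact in the product (= pointwise) topology, so the $\tau_p$-closure $\overline{\KK}$, taken in $\IR^X$, is a closed subset of a compact space and hence $\tau_p$-compact. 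It then remains to verify that $\overline{\KK}$ lies in $C(X)$, that it is evenly continuous, and that on it $\tau_k$ agrees with $\tau_p$.

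The heart of the argument is to push even continuity from $\KK$ to $\overline{\KK}$, and this is the step I expect to be the main obstacle. Here I would fix $x\in X$, a value $y\in\IR$, and an open neighbourhood $O_y$ of $y$, then \emph{shrink}: using regularity of $\IR$ choose an open $W_y\ni y$ with $\overline{W_y}\subseteq O_y$. Applying even continuity of $\KK$ to the triple $(x,y,W_y)$ yields a neighbourhood $O_x$ of $x$ and an open $V_y\ni y$ such that $f(O_x)\subseteq W_y$ for every $f\in\KK$ with $f(x)\in V_y$. I claim the same $O_x,V_y$ witness even continuity of $\overline{\KK}$ at $(x,y)$ with target $O_y$: given $g\in\overline{\KK}$ with $g(x)\in V_y$ and any $t\in O_x$, take a net $(f_\alpha)\subseteq\KK$ with $f_\alpha\to g$ in $\tau_p$; since $V_y$ is open and $f_\alpha(x)\to g(x)\in V_y$, eventually $f_\alpha(x)\in V_y$, whence $f_\alpha(t)\in W_y$, and passing to the limit gives $g(t)\in\overline{W_y}\subseteq O_y$. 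The open/closed shrinking is exactly what lets the limit survive. Moreover the even-continuity condition, now established for $\overline{\KK}$, forces each $g\in\overline{\KK}$ to be continuous (apply it with $O_y$ a small interval about $g(x)$, noting $g(x)\in V_y$), so $\overline{\KK}\subseteq C(X)$ and the two notions of $\tau_p$-closure coincide; thus $\overline{\KK}$ is a $\tau_p$-compact, evenly continuous subset of $C(X)$.

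Finally I would collapse the two topologies. Classical even continuity implies $\tau_p$-even continuity (the diagram preceding Lemma \ref{l:Ascoli-free-evenly}), so $\overline{\KK}$ is a $\tau_p$-evenly continuous, $\tau_p$-compact subset of $C(X)$, and Lemma \ref{l:Ascoli-free-evenly}(ii) (with $\Tt$ the family of finite sets, so $\tau_\Tt=\tau_p$) makes $\overline{\KK}$ equicontinuous. To see $\tau_k|_{\overline{\KK}}\leq\tau_p|_{\overline{\KK}}$, fix $g_0\in\overline{\KK}$, a compact $K\subseteq X$ and $\e>0$; by equicontinuity I cover $K$ by finitely many sets $O_{t_1},\dots,O_{t_n}$ on each of which every member of $\overline{\KK}$ oscillates by less than $\e/3$, and then a standard three-term estimate shows that the basic $\tau_p$-neighbourhood $\{g:|g(t_i)-g_0(t_i)|<\e/3,\ i\leq n\}$ of $g_0$ lies inside $g_0+[K,\e]$. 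Hence the identity map $(\overline{\KK},\tau_p)\to(\overline{\KK},\tau_k)$ is continuous; being a continuous bijection from a compact space onto a Hausdorff space, it is a homeomorphism. Therefore $\tau_k|_{\overline{\KK}}=\tau_p|_{\overline{\KK}}$ and $\overline{\KK}$ is $\tau_k$-compact, which together with the even continuity established above completes the proof.
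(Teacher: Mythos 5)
Your proposal is correct and follows essentially the approach the paper relies on: the paper gives no in-text proof but cites the ``if'' part of the Ascoli theorem \cite[Theorem 3.4.20]{Eng}, and your argument is precisely that classical proof --- embed $\KK$ in the compact product $\prod_{x\in X}\overline{\KK(x)}$, transfer even continuity to the $\tau_p$-closure by the open/closed shrinking trick, deduce continuity of the limit functions, and collapse $\tau_k$ to $\tau_p$ via equicontinuity and compactness. The only (cosmetic) difference is that you route the coincidence of the two topologies through the paper's Lemma \ref{l:Ascoli-free-evenly}(ii) and a three-$\e$ estimate rather than through Engelking's corresponding lemma, which is a perfectly valid substitution.
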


Recall that the {\em  free locally convex space} $L(X)$ over a Tychonoff space $X$ is a pair consisting of a locally convex space $L(X)$ and  a continuous mapping $i: X\to L(X)$ such that every  continuous mapping $f$ from $X$ to a locally convex space $E$ gives rise to a unique continuous linear operator ${\bar f}: L(X) \to E$  with $f={\bar f} \circ i$. The free locally convex space $L(X)$  always exists and is  unique.
The set $X$ forms a Hamel basis for $L(X)$, and  the mapping $i$ is a topological embedding \cite{Rai,Usp2}.

Denote by $\delta_X:  X\mapsto \CC(\CC(X))$, $\delta_X (x) (f):=f(x)$, the canonical valuation map. Taking into account the definition of $L(X)$, the map $\delta_X$ can be extended to the canonical linear monomorphism $\Delta_X: L(X)\to \CC\big(\CC(X)\big)$  defined by the assignment
\[
\Delta_X \big( a_1 x_1 +\cdots + a_n x_n\big)(f) :=a_1 f(x_1) +\cdots + a_n f(x_n),
\]
where $n\in\NN$, $x_1,\dots, x_n\in X$, $a_1,\dots,a_n\in\IR$ and $f\in \CC(X)$. If $X$ is a $k$-space, Flood~\cite{Flo2} and Uspenski\u{\i}~\cite{Usp} proved that $\Delta_X$ is an embedding  of locally convex spaces. So Theorem \ref{t:Ascoli-free-map} generalizes this result.
Now we are ready to prove Theorem \ref{t:Ascoli-free-map}.

\begin{proof}[Proof of Theorem \ref{t:Ascoli-free-map}]
{\em Necessity.}
In \cite{Rai}  Raikov  showed that the topology $\pmb{\nu}_X$ of $L(X)$ is the topology of uniform convergence on equicontinuous pointwise bounded subsets $A$ of $C(X)$.  Lemma \ref{l:Ascoli-free-evenly} and Proposition \ref{p:Ascoli-Free-Ck} imply that the closure $\overline{A}^{\, \tau_{k}}$ of $A$ is $\tau_k$-compact and $\tau_k$-evenly continuous. Conversely, if $A$ is  $\tau_k$-evenly continuous and $\tau_k$-compact, then  $A$ is equicontinuous by Lemma \ref{l:Ascoli-free-evenly}. Clearly, $A$ is $\tau_p$-compact, and hence $\{ f(x):f\in A\}$ is compact in $\IR$ for every $x\in X$. Therefore $A$ is also pointwise bounded. Hence the topology $\pmb{\nu}_X$ coincides with  the topology of uniform convergence on compact equicontinuous subsets of $\CC(X)$. Since the space $X$ is Ascoli, every compact subset of $\CC(X)$ is equicontinuous  by Lemma \ref{l:Ascoli-free-evenly}. Taking into account that the canonical map $\Delta_X$ is injective we obtain that $\pmb{\nu}_X$ coincides with the compact-open topology inherited from $\CC\big(\CC(X)\big)$. Thus $\Delta_X$ is an embedding.

{\em Sufficiency.} If $\Delta_X$ is an embedding, then the canonical map $\delta_X =\Delta_X |_X$ is an embedding as well (recall that $X$ is a subspace of $L(X)$). Thus $X$ is an Ascoli space by Lemma \ref{l:Ascoli-free-evenly}. 
\end{proof}

Below we give an application of Theorem \ref{t:Ascoli-free-map}. First we recall some definitions.

Following Markov \cite{Mar}, a topological group  $A(X)$ is called {\em  the (Markov) free abelian topological  group} over  $X$ if $A(X)$ satisfies the following conditions: (i) there is a continuous mapping  $i: X\to A(X)$ such that $i(X)$ algebraically generates $A(X)$, and (ii) if $f: X\to G$ is a continuous mapping to an abelian topological  group $G$, then there exists a continuous homomorphism ${\bar f}: A(X) \to G$ such that $f={\bar f} \circ i$. The topological  group $A(X)$ always exists and is essentially unique, the mapping $i$ is a topological embedding \cite{Mar}. Note also that the identity map $id_X :X\to X$ extends to a canonical homomorphism $id_{A(X)}: A(X)\to L(X)$ which is an embedding of topological groups \cite{Tkac, Usp2}.

Following Michael \cite{Mich}, a topological space $X$ is called an \emph{$\aleph_0$-space}, if $X$ is a regular space with a countable $k$-network (a family $\mathcal{N}$ of subsets of $X$ is called a \emph{$k$-network} in $X$ if, whenever $K\subseteq U$ with $K$ compact and $U$ open in $X$, then $K\subseteq \bigcup \mathcal{F}\subseteq U$ for some finite family $\mathcal{F} \subseteq\mathcal{N}$). A topological space $X$ is called \emph{cosmic} \cite{Mich}, if $X$ is a regular space with a countable network (a family $\mathcal{N}$ of subsets of $X$ is called a \emph{network} in $X$ if, whenever $x\in U$ with $U$ open in $X$, then $x\in N\subseteq U$ for some $N \in\mathcal{N}$).
Following Banakh \cite{Banakh}, a  topological space $X$ is called  a {\em $\Pp_0$-space} if $X$ has a countable Pytkeev network (a family $\mathcal{N}$ of subsets of a topological space $X$ is called a  {\em Pytkeev network} if $\Nn$ is a network in $X$ and  for every point $x\in X$ and every open set $U\subseteq X$ and a set $A$ accumulating at $x$ there is a set $N\in\Nn$ such that $N\subseteq U$ and $N\cap A$ is infinite). Any $\Pp_0$-space is an $\aleph_0$-space, see \cite{Banakh}.

It is known (see \cite{GM}) that, for a Tychonoff space $X$, the space $L(X)$ is cosmic if and only if the group $A(X)$ is cosmic  if and only if the space $X$ is cosmic. For $k$-spaces the next corollary is proved in \cite{Gab-MSJ}.
\begin{corollary} \label{c:Free-Aleph}
Let $X$ be an Ascoli space. Then:
\begin{enumerate}
\item[{\rm (i)}]  $L(X)$  is an $\aleph_0$-space if and only if $A(X)$  is an $\aleph_0$-space if and only if $X$  is an  $\aleph_0$-space;
\item[{\rm (ii)}]  $L(X)$  is a $\Pp_0$-space if and only if $A(X)$  is a $\Pp_0$-space if and only if $X$  is a  $\Pp_0$-space.
\end{enumerate}
\end{corollary}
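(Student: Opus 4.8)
The plan is to prove each of the two statements by closing the cycle of implications
$L(X)\Rightarrow A(X)\Rightarrow X\Rightarrow L(X)$, where I write $Y\Rightarrow Z$ for ``if $Y$ enjoys the relevant network property then so does $Z$''. The first two implications are soft and rely only on heredity of the properties to subspaces, while the last one is the substantive step and is where the Ascoli hypothesis enters via Theorem \ref{t:Ascoli-free-map}.

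First I would note that both the class of $\aleph_0$-spaces and the class of $\Pp_0$-spaces are inherited by arbitrary subspaces: if $\Nn$ is a countable $k$-network (resp. Pytkeev network) for $Z$ and $Y\subseteq Z$, then $\{N\cap Y:N\in\Nn\}$ is a countable $k$-network (resp. Pytkeev network) for $Y$, as one checks directly using that compact subsets, relatively open sets, and accumulation points relativize correctly. Granting this, the implications $L(X)\Rightarrow A(X)$ and $A(X)\Rightarrow X$ are immediate from the two embeddings recalled above: the canonical map $i\colon X\to A(X)$ is a topological embedding, and $id_{A(X)}\colon A(X)\to L(X)$ is an embedding of topological groups, so $X$ is (homeomorphic to) a subspace of $A(X)$, which is a subspace of $L(X)$.

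The heart of the matter is the implication $X\Rightarrow L(X)$. Here I would use the function-space duality for these network classes: for any Tychonoff space $Y$, if $Y$ is an $\aleph_0$-space then so is $\CC(Y)$ (Michael \cite{Mich}), and if $Y$ is a $\Pp_0$-space then so is $\CC(Y)$ (\cite{Banakh}). Applying this twice --- first to $X$ and then to the (again Tychonoff) space $\CC(X)$ --- shows that $\CC\big(\CC(X)\big)$ is an $\aleph_0$-space (resp. a $\Pp_0$-space) whenever $X$ is. Since $X$ is Ascoli, Theorem \ref{t:Ascoli-free-map} provides that $\Delta_X\colon L(X)\to\CC\big(\CC(X)\big)$ is an embedding of locally convex spaces, so $L(X)$ is topologically a subspace of $\CC\big(\CC(X)\big)$. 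By the heredity from the previous paragraph, $L(X)$ then inherits the $\aleph_0$- (resp. $\Pp_0$-)property, which closes the cycle and yields both equivalences.

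I expect the implication $X\Rightarrow L(X)$ to be the main obstacle, and within it the realization of $L(X)$ as a subspace of a well-behaved double function space. Without the Ascoli assumption the canonical monomorphism $\Delta_X$ need not be a topological embedding, so $L(X)$ could fail to sit inside $\CC\big(\CC(X)\big)$ and the argument would collapse; this is precisely why the earlier $k$-space hypothesis of \cite{Gab-MSJ} can be weakened to the Ascoli property but not removed. The only other point requiring care is the legitimacy of applying the function-space duality a second time, which is harmless because $\CC(X)$ is itself a Tychonoff (indeed locally convex) space, so the duality theorems apply to it verbatim.
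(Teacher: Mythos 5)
Your proposal is correct and follows essentially the same route as the paper: the two soft implications via the embeddings $X\hookrightarrow A(X)\hookrightarrow L(X)$ and heredity, plus the key step using Michael's and Banakh's function-space results twice together with Theorem \ref{t:Ascoli-free-map} to embed $L(X)$ into $\CC\big(\CC(X)\big)$. The only difference is that you spell out the (standard) heredity of $\aleph_0$- and $\Pp_0$-properties, which the paper takes for granted.
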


\begin{proof}
If $L(X)$  is an $\aleph_0$-space (a $\Pp_0$-space), then so is $A(X)$ as a subspace of $L(X)$. Analogously, if $A(X)$  is an $\aleph_0$-space (a $\Pp_0$-space), then so is $X$ as a subspace of $A(X)$.  If $X$ is an $\aleph_0$-space (a $\Pp_0$-space), then so are $\CC(X)$ and  $\CC(\CC(X))$ by \cite{Mich} (respectively, \cite{Banakh}).  As $X$ is an Ascoli space,  $L(X)$ is a subspace of $\CC(\CC(X))$ by  Theorem \ref{t:Ascoli-free-map}. So $L(X)$ is an $\aleph_0$-space  (respectively, a $\Pp_0$-space).
\end{proof}
We do not know whether the condition on $X$ to be an Ascoli space can be omitted as for cosmic spaces in \cite{GM}, namely: Does there exist a non-Ascoli $\aleph_0$-space (a non-Ascoli $\Pp_0$-space) $X$ such that $L(X)$ is an $\aleph_0$-space (respectively, a $\Pp_0$-space)?


\section{The Ascoli property for direct sums of locally convex spaces}   \label{sec-2} 


Let us recall some basic notions used in what follows. We denote by $e$ the unit of a group $G$.

For a non-empty family $\{ G_i \}_{i\in I}$  of groups, the \emph{direct sum} of $G_i$ is denoted by
\[
\bigoplus_{i\in I} G_i :=\left\{ (g_i)_{i\in I} \in \prod_{i\in I} G_i : \; g_i = e_i \mbox{ for almost all } i \right\},
\]
and we  denote by $j_k $ the natural embedding of $G_k$ into $\bigoplus_{i\in I} G_i$; that is,
\[
j_k (g)=(g_i)\in \bigoplus_{i\in I} G_i, \mbox{ where } g_i =g  \mbox{ if } i=k \mbox{ and } g_i =e_i  \mbox{ if } i\not= k.
\]
If $\{ G_i \}_{i\in I}$ is  a non-empty family  of topological groups {\it the final group topology} $\mathcal{T}_f$ on $\bigoplus_{i\in I} G_i$  with respect to the family of canonical homomorphisms $j_k : G_k \to \bigoplus_{i\in I} G_i$  is the finest group topology on $\bigoplus_{i\in I} G_i$ such that all $j_k$ are continuous. For the sake of simplicity we shall identify an element $g_k\in G_k$ with its image $j_k(g_k)$ in $\bigoplus_{i\in I} G_i$ and write simply $g_k$.

For an element $v= g_{i_1}+ \cdots + g_{i_n}\in \bigoplus_{i\in I} G_i$ with $g_{i_k}\not= e_{i_k}$ for every $1\leq k\leq n$, we set $\supp(v):=\{ i_1,\dots,i_n\}$. The {\em support} of a subset $A$ of $\bigoplus_{i\in I} G_i$ is the set
\[
\supp(A):= \bigcup_{v\in A} \supp(v).
\]

Let $\{ (G_i, \tau_i) \}_{i\in I}$ be a non-empty  family of  topological groups and $\mathcal{N}(G_i)$ a basis of open neighborhoods   at the identity in  $G_i$, for each $i\in I$.  For each $i\in I$, fix $U_i \in \mathcal{N}(G_i)$ and put
\[
 \prod_{i\in I} U_i :=\left\{ (g_i)_{i\in I} \in \prod_{i\in I} G_i : \; g_i \in U_i \mbox{ for  all } i \in I \right\}.
\]
Then the sets of the form $ \prod_{i\in I} U_i $, where $U_i \in \mathcal{N}(G_i)$ for every $i\in I$, form a basis of open neighborhoods at the identity of a topological group topology {${\mathcal T}_b$} on $\prod_{i\in I} G_i$ that is called  the \emph{box topology}.
Clearly, $\mathcal{T}_b \leq \mathcal{T}_f$  on $\bigoplus_{i\in I} G_i$.  It is well-known that if $I$ is countable, then  $\mathcal{T}_b = \mathcal{T}_f$.

\begin{proposition}\label{p:boxtopology-bounded}
Let $\{ (G_i, \tau_i) \}_{i\in I}$ be a non-empty  family of  topological groups and $\tau$ be a group topology on $\bigoplus_{i\in I} G_i$ such that $\mathcal{T}_b\leq \tau \leq \mathcal{T}_f$.  If $A$ is a functionally bounded subset of $(\bigoplus_{i\in I} G_i, \tau)$, then $\supp(A)$ is finite.
\end{proposition}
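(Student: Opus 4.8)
The plan is to prove the contrapositive: if $\supp(A)$ is infinite, then $A$ fails to be functionally bounded in $(\bigoplus_{i\in I}G_i,\tau)$. Since $\mathcal{T}_b\leq\tau$, the identity map $(\bigoplus_{i\in I}G_i,\tau)\to(\bigoplus_{i\in I}G_i,\mathcal{T}_b)$ is continuous, so every $\mathcal{T}_b$-continuous real-valued function is automatically $\tau$-continuous. Hence it suffices to construct a single $\mathcal{T}_b$-continuous function $H\colon \bigoplus_{i\in I}G_i\to\IR$ that is unbounded on $A$; the upper bound $\tau\leq\mathcal{T}_f$ is not needed for this direction.

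First I would extract, by induction on $n\in\NN$, a sequence $(v_n)$ in $A$ together with pairwise distinct indices $j_n$ and continuous functions $h_n\colon G_{j_n}\to\IR$. At stage $n$, having chosen $v_1,\dots,v_{n-1}$, put $F_{n-1}:=\bigcup_{k<n}\supp(v_k)$, a finite set; since $\supp(A)$ is infinite I may pick $j_n\in\supp(A)\setminus F_{n-1}$ and then $v_n\in A$ with $j_n\in\supp(v_n)$. This guarantees that the $j_n$ are distinct (each $j_k\in\supp(v_k)\subseteq F_{n-1}$) and, crucially, that $j_n\notin\supp(v_k)$ for all $k<n$, so the coordinate of any earlier element at the index $j_n$ is trivial. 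Writing $c_n:=\sum_{m<n}h_m\big((v_n)_{j_m}\big)$, a fixed real number once the earlier $h_m$ are fixed, I would use that $G_{j_n}$ is Tychonoff and $(v_n)_{j_n}\neq e_{j_n}$ to choose a continuous $\phi_n\colon G_{j_n}\to\IR$ with $\phi_n(e_{j_n})=0$ and $\phi_n\big((v_n)_{j_n}\big)=1$, and set $h_n:=(n+1+|c_n|)\,\phi_n$.

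Finally I would define $H:=\sum_{n\in\NN} h_n\circ\pi_{j_n}$, where $\pi_{j_n}$ is the coordinate projection, which is $\mathcal{T}_b$-continuous. For each $v$ in the direct sum only finitely many coordinates are non-trivial and $h_n(e_{j_n})=0$, so the sum defining $H(v)$ is finite and $H$ is well defined. The triangular structure of the construction gives $H(v_n)=h_n\big((v_n)_{j_n}\big)+c_n$: the terms with index $m>n$ vanish because $j_m\notin\supp(v_n)$, the term $m=n$ equals $n+1+|c_n|$, and the lower terms sum to $c_n$, whence $H(v_n)>n$ and $H$ is unbounded on $A$. The main obstacle is verifying that $H$ is $\mathcal{T}_b$-continuous, since an infinite sum of continuous functions need not be continuous. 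To check continuity at a point $w$ of finite support I would note that $N_w:=\{n:j_n\in\supp(w)\}$ is finite; given $\e>0$, continuity of each $h_n$ at $e_{j_n}$ yields a neighborhood $U_n$ of $e_{j_n}$ on which $|h_n|<\e/2^{n+1}$, while for the finitely many $n\in N_w$ continuity at $w_{j_n}$ yields neighborhoods controlling those terms to within $\e/2$ in total. Taking the box neighborhood of $w$ that imposes these coordinatewise conditions at the indices $j_n$, and no condition at the remaining indices, on which $H$ does not depend, forces $|H(g)-H(w)|<\e$ for every $g$ in it, using once more that $g$ has finite support so the estimate reduces to a finite sum. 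This establishes $\mathcal{T}_b$-continuity of $H$ and yields the desired contradiction with functional boundedness.
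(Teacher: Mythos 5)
Your proof is correct, and it shares with the paper's argument the same combinatorial core: inductively extracting from $A$ a sequence $v_1,v_2,\dots$ together with fresh indices $j_n\in\supp(v_n)$ avoiding the supports of all previously chosen elements. Where you genuinely diverge is in how this sequence contradicts functional boundedness. The paper first projects $A$ onto a countable subsum $\bigoplus_{n\in\NN}G_{i_n}$ with the box topology, then chooses at each fresh coordinate a symmetric identity neighborhood $U_{i_{n_k}}$ with $g^k_{i_{n_k}}\notin U_{i_{n_k}}\cdot U_{i_{n_k}}$, so that suitable translates of the extracted points form a discrete family of open sets; failure of functional boundedness is then asserted from this discreteness (a standard fact the paper does not re-prove: a discrete family supports a locally finite sum of bump functions with unbounded values). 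You instead exhibit explicitly a $\mathcal{T}_b$-continuous function $H=\sum_n h_n\circ\pi_{j_n}$ unbounded on $\{v_n\}$, using Tychonoff separation in each coordinate group, the weights $n+1+|c_n|$ with the telescoping constants $c_n$ to neutralize interference from earlier coordinates, and the geometric bounds $\e/2^{n+1}$ near the identities to make the infinite sum continuous; your observation that only $\mathcal{T}_b\leq\tau$ is needed (not $\tau\leq\mathcal{T}_f$) is accurate, and in fact the paper's proof uses only that bound as well. Your route is more self-contained --- no projection step and no appeal to the discreteness criterion, whose proof is essentially the bump-function construction you carry out by hand --- at the cost of heavier bookkeeping; the paper's route is shorter given that standard fact. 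Note that both arguments silently use the paper's standing assumption that all spaces (hence all $G_i$) are Hausdorff: you need it to separate $(v_n)_{j_n}$ from $e_{j_n}$ by a continuous function, the paper needs it to find $U$ with $g^k_{i_{n_k}}\notin U\cdot U$.
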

\begin{proof}
Suppose for a contradiction that $\supp(A)$ is infinite. Take a one-to-one sequence $\{ i_n\}_{n\in\NN}$ in $\supp(A)$. Then the projection of $\tau$ on $\bigoplus_{n\in \NN} G_{i_n}$ is $\mathcal{T}_b$. 
Clearly, the projection $B$ of $A$ in the group $G:= \big(\bigoplus_{n\in \NN} G_{i_n}, \mathcal{T}_b\big)$ is also functionally bounded and $\supp(B)=\NN$. Choose a sequence $\{ b_k=(g_{i_n}^{k}): k\in\NN\} $ in $B$ such that for every $k\in \NN$ there is an index $i_{n_{k+1}} \in \supp(b_{k+1})$ such that
\begin{equation} \label{equ:Coproduct-1}
i_{n_{k+1}} \not\in \bigcup_{j\leq k} \supp(b_j).
\end{equation}
Clearly, the sequence  $\{ b_k\}$ is also functionally bounded in $G$. For every $k\in\NN$, take a symmetric neighborhood $U_{i_{n_k}}$ of the identity in $G_{i_{n_k}}$ such that $g_{i_{n_k}}^{k} \not\in U_{i_{n_k}}\cdot U_{i_{n_k}}$. If $i_n\not\in \{ i_{n_1},i_{n_2},\dots\}$, we set $U_{i_n} = G_{i_n}$. Set $U:= G\cap\prod_{n\in\NN} U_{i_n} \in \mathcal{T}_b$.  Now, if $k<m$, then $b_k U\cap b_m U=\emptyset$ since, otherwise, for some $h,t\in U_{i_{n_m}}$ we would have
\[
g_{i_{n_m}}^{m}t= g_{i_{n_m}}^{k}h =h \mbox{ by (\ref{equ:Coproduct-1}), and hence } g_{i_{n_m}}^{m}=ht^{-1}\in U_{i_{n_m}}\cdot U_{i_{n_m}}
\]
that contradicts the choice of $g_{i_{n_m}}^{m}$ and $U_{i_{n_m}}$. So, if $V\in \mathcal{T}_b$ is such that $\overline{V}\overline{V}\subseteq U$, then $\{ b_k V\}_{k\in\NN}$ is a discrete family in $G$. Thus the sequence $\{ b_k\}$ is not functionally bounded in $G$, a contradiction.
\end{proof}

Let $\kappa$ be an infinite cardinal, $\mathbb{V}_\kappa =\bigoplus_{i\in\kappa} \mathbb{R}_i$ be a vector space of dimension $\kappa$ over $\RR$, $\pmb{\tau}_\kappa$ be the box topology on $\mathbb{V}_\kappa$, $\pmb{\mu}_\kappa$  and $\pmb{\nu}_\kappa$  be the maximal and maximal locally convex vector topologies on $\VV_\kappa$ respectively. Clearly, $\pmb{\tau}_\kappa \subseteq \pmb{\nu}_\kappa \subseteq \pmb{\mu}_\kappa$ and $L(D)\cong (\mathbb{V}_\kappa, \pmb{\nu}_\kappa)$, where $D$ is a discrete space of cardinality $\kappa$. It is well-known that $\pmb{\tau}_\omega =\pmb{\nu}_\omega = \pmb{\mu}_\omega$ (see \cite[Proposition 4.1.4]{Jar}). However, if $\kappa$ is uncountable, then  (see \cite{Prot} or  \cite[Theorem 2.1]{Gabr})
\[
\pmb{\tau}_\kappa \subsetneq \pmb{\nu}_\kappa \subsetneq \pmb{\mu}_\kappa.
\]

We shall use the following simple description of the topology $\pmb{\mu}_\kappa$ given in the proof of Theorem 1 in \cite{Prot}.  For each $i\in\kappa$, choose some $\lambda_i \in \RR_i^+, \lambda_i >0$, and denote by $\mathcal{S}_\kappa$ the family of all subsets of $\VV_\kappa$ of the form
\[
\bigcup_{i\in\kappa} \left( [-\lambda_i,\lambda_i ] \times \prod_{j\in\kappa ,\ j\not= i} \{ 0\} \right).
\]
For every sequence $\{ S_k\}_{k\in\omega}$ in $\mathcal{S}_\kappa$, we put
\[
\sum_{k\in\omega} S_k := \bigcup_{k\in\omega} (S_0 + S_1 +\cdots + S_k),
\]
and denote by $\mathcal{N}_\kappa$ the family of all subsets of $\VV_\kappa$ of the form $\sum_{k\in\omega} S_k$. It is easy to check that $\mathcal{N}_\kappa$ is a base at zero $\mathbf{0}$ for $\pmb{\mu}_\kappa$ and the family $\mathcal{\widehat{N}}_\kappa :=\{ \conv(V): V\in \mathcal{N}_\kappa\}$ of convex hulls is a base  at $\mathbf{0}$ for $\pmb{\nu}_\kappa$ (see \cite{Prot}). 

We shall use also the following proposition to show that a space is not Ascoli.
\begin{proposition}[\cite{GKP}] \label{p:Ascoli-sufficient}
Assume   $X$ admits a  family $\U =\{ U_i : i\in I\}$ of open subsets of $X$, a subset $A=\{ a_i : i\in I\} \subset X$ and a point $z\in X$ such that
\begin{enumerate}
\item[{\rm (i)}] $a_i\in U_i$ for every $i\in I$;
\item[{\rm (ii)}] $\big|\{ i\in I: C\cap U_i\not=\emptyset \}\big| <\infty$  for each compact subset $C$ of $X$;
\item[{\rm (iii)}] $z$ is a cluster point of $A$.
\end{enumerate}
Then $X$ is not an Ascoli space.
\end{proposition}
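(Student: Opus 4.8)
The plan is to produce a single compact subset $\KK$ of $\CC(X)$ that fails to be equicontinuous. By Lemma \ref{l:Ascoli-free-evenly}(iv) this is exactly what is required, since in an Ascoli space every compact subset of $\CC(X)$ must be equicontinuous.

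\emph{Construction of $\KK$.} For each $i\in I$, since $X$ is Tychonoff, $a_i\in U_i$ and $U_i$ is open, I would use complete regularity to pick a continuous function $f_i\colon X\to[0,1]$ with $f_i(a_i)=1$ and $f_i\equiv 0$ on $X\setminus U_i$ (so the carrier $\{x: f_i(x)\neq 0\}$ is contained in $U_i$). Then set $\KK:=\{f_i:i\in I\}\cup\{0\}$.

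\emph{Compactness of $\KK$.} This is where condition (ii) enters. Given any open cover of $\KK$, pick a member $W_0$ containing $0$; it contains a basic neighborhood $[C,\e]=\{f:f(C)\subseteq(-\e,\e)\}$ of $0$ for some compact $C\subseteq X$ and some $\e>0$. By (ii) the index set $F:=\{i\in I: U_i\cap C\neq\emptyset\}$ is finite, and for $i\notin F$ we have $C\subseteq X\setminus U_i$, so $f_i\equiv 0$ on $C$ and hence $f_i\in[C,\e]\subseteq W_0$. Thus $\KK\setminus W_0\subseteq\{f_i:i\in F\}$ is finite, and covering these finitely many functions gives a finite subcover. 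So $\KK$ is compact in $\CC(X)$.

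\emph{Failure of equicontinuity at $z$.} The decisive observation is to apply (ii) to the compact singleton $\{z\}$: only finitely many indices $i$ satisfy $z\in U_i$, and therefore $f_i(z)=0$ for all but finitely many $i$. Suppose, for contradiction, that $\KK$ were equicontinuous at $z$; taking $\e=1/2$, there would be a neighborhood $O_z$ of $z$ with $|f(y)-f(z)|<1/2$ for all $y\in O_z$ and all $f\in\KK$. Since $X$ is $T_1$ and $z$ is a cluster point of $A$ by (iii), the neighborhood $O_z$ contains $a_i$ for infinitely many $i$; combined with the previous sentence I can choose an index $i_0$ with $a_{i_0}\in O_z$ and $f_{i_0}(z)=0$. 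Then $|f_{i_0}(a_{i_0})-f_{i_0}(z)|=|1-0|=1>1/2$ with $a_{i_0}\in O_z$, contradicting equicontinuity. Hence $\KK$ is a compact, non-equicontinuous subset of $\CC(X)$, and by Lemma \ref{l:Ascoli-free-evenly}(iv) the space $X$ is not Ascoli.

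\textbf{Expected main obstacle.} The only genuinely delicate point is the interplay of (ii) and (iii) in the last step. Condition (ii) applied to the singleton $\{z\}$ is what forces $f_i(z)=0$ for cofinitely many $i$, and this is precisely what lets the accumulation guaranteed by (iii) violate equicontinuity; the $T_1$ separation of a Tychonoff space is needed to upgrade ``cluster point'' to ``infinitely many $a_i$ in every neighborhood of $z$''. By comparison, verifying compactness directly from (ii) is routine once the $f_i$ are chosen to vanish off $U_i$.
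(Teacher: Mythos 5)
Your proof is correct, and it is essentially the argument used in \cite{GKP}, from which the paper imports this proposition without reproving it: Urysohn bump functions $f_i$ supported in $U_i$ with $f_i(a_i)=1$, compactness of $\{f_i\}\cup\{0\}$ in $\CC(X)$ from (ii), and failure of equicontinuity (equivalently, of even continuity, via Lemma \ref{l:Ascoli-free-evenly}(iv)) at $z$ from (iii) together with the $T_1$ fact that a cluster point of $A$ has infinitely many $a_i$ in each of its neighborhoods. Your extra observation that (ii) applied to the singleton $\{z\}$ forces $f_i(z)=0$ for all but finitely many $i$ is exactly the point that makes the equicontinuity contradiction go through.
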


Let us recall some definitions. Let $\mathcal{N}$ be a family  of subsets of a topological space $X$. Then:

$\bullet$ (\cite{Gao}) $\Nn$ is a {\em $cs^\ast$-network at  a point} $x\in X$ if for each sequence $(x_n)_{n\in\w}$ in $X$ converging to  $x$ and for each neighborhood $O_x$ of $x$ there is a set $N\in\mathcal{N}$ such that $x\in N\subset O_x$ and the set $\{n\in\w :x_n\in N\}$ is infinite;

$\bullet$ (\cite{GK-GMS1})   a {\em $cn$-network}  at a point $x\in X$ if for each neighborhood $O_x$ of $x$ the set $\bigcup \{ N \in\Nn : x\in N \subseteq O_x \}$ is a neighborhood of $x$.

We say that $X$ has {\em countable $cs^\ast$-character} ({\em countable $cn$-character}, respectively) if $X$ has  a countable $cs^\ast$-network ($cn$-network, respectively) at each point $x\in X$. It is easy to see that the property to have countable $cs^\ast$-character (or countable $cn$-character) is hereditary.

\begin{theorem} \label{tFreeSpace-Ascoli}
Let $\kappa$ be an uncountable cardinal and let $\tau$ be a vector topology on $\VV_\kappa$ such that $\pmb{\tau}_\kappa \subseteq \tau \subseteq \pmb{\nu}_\kappa$. Then:
\begin{enumerate}
\item[{\rm (i)}] $(\VV_\kappa, \tau)$ is not an Ascoli space;
\item[{\rm (ii)}] {\em (\cite{Gabr})} $(\VV_\kappa, \tau)$  has uncountable tightness;
\item[{\rm (iii)}] $(\VV_\kappa, \tau)$ has uncountable $cs^\ast$-character and uncountable $cn$-character.
\end{enumerate}
In particular, $L(D)$ is not Ascoli for every uncountable discrete space $D$.
\end{theorem}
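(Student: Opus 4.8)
The plan is to derive \emph{(i)} from Proposition \ref{p:Ascoli-sufficient} applied to an uncountable fan at $z=\mathbf{0}$. Index by the nonempty finite subsets $F\subseteq\kappa$ and set $a_F:=\tfrac1{|F|^2}\sum_{j\in F}e_j$ and $U_F:=\{x\in\VV_\kappa:\ |x_j-\tfrac1{|F|^2}|<\tfrac1{2|F|^2}\ \text{for all }j\in F\}$. Each $U_F$ is box-open, hence $\tau$-open since $\pmb{\tau}_\kappa\subseteq\tau$, and $a_F\in U_F$, which is condition (i) of that proposition. For condition (ii) I would invoke Proposition \ref{p:boxtopology-bounded}: as $\pmb{\tau}_\kappa=\mathcal{T}_b$ and $\tau\subseteq\pmb{\nu}_\kappa\subseteq\pmb{\mu}_\kappa\subseteq\mathcal{T}_f$, every $\tau$-compact $C$ is functionally bounded and so $\supp(C)$ is finite. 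Since $x\in U_F$ forces $x_j\neq0$ for all $j\in F$, i.e.\ $F\subseteq\supp(x)$, a compact set $C$ meets $U_F$ only when $F\subseteq\supp(C)$; a finite set has finitely many subsets, so $\{F:C\cap U_F\neq\emptyset\}$ is finite.

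The step I expect to be the main obstacle is condition (iii), that $\mathbf{0}$ is a cluster point of $\{a_F\}$; this must be verified for the finest admissible topology $\pmb{\nu}_\kappa$, since a coarser $\tau\subseteq\pmb{\nu}_\kappa$ only makes clustering easier. Here I would use the explicit base $\mathcal{\widehat{N}}_\kappa=\{\conv(V):V\in\mathcal{N}_\kappa\}$ at $\mathbf{0}$. Given $\conv(V)$ with $V=\sum_kS_k$ and first term $S_0\in\mathcal{S}_\kappa$ determined by constants $(\lambda_j)_{j\in\kappa}$, uncountability of $\kappa$ forces some $J_N:=\{j:\lambda_j>1/N\}$ to be infinite, for otherwise $\kappa=\bigcup_NJ_N$ would be countable. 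Choosing $F\subseteq J_N$ with $n:=|F|\ge N$, each $\tfrac1n e_j$ with $j\in F$ lies in $S_0\subseteq V$, so $a_F=\tfrac1n\sum_{j\in F}\tfrac1n e_j$ is a convex combination of points of $V$, whence $a_F\in\conv(V)$. Thus every basic $\pmb{\nu}_\kappa$-neighborhood of $\mathbf{0}$ contains some $a_F$, giving (iii) and hence (i).

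For \emph{(iii)} I would show $\mathbf{0}$ admits no countable $cs^\ast$-network and no countable $cn$-network by one support-dichotomy argument. Suppose $\Nn=\{N_l\}_{l\in\omega}$ is such a network with $\mathbf{0}\in N_l$. Separate the members of finite support from those of infinite support; for each infinite-support $N_l$ pick a coordinate $j_l\in\supp(N_l)$, taken distinct, and a point $x^{(l)}\in N_l$ with $x^{(l)}_{j_l}\neq0$, and let $O$ be the box-neighborhood shrinking each coordinate $j_l$ below $|x^{(l)}_{j_l}|$ while leaving every other coordinate free; then no infinite-support $N_l$ is contained in $O$. The coordinates $\{j_l\}$ together with the supports of the finitely-supported members form a countable set, so uncountability of $\kappa$ yields $j^\ast\in\kappa$ avoided by all of them. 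For $cs^\ast$, the sequence $(\tfrac1m e_{j^\ast})_m\to\mathbf{0}$ (convergence holds in $\pmb{\nu}_\kappa$, hence in $\tau$) lies in $O$, yet any $N_l\subseteq O$ catching infinitely many of its terms must have $j^\ast\in\supp(N_l)$, impossible for the surviving finite-support members. For $cn$, $\bigcup\{N_l:\mathbf{0}\in N_l\subseteq O\}$ is contained in the hyperplane $\{x_{j^\ast}=0\}$, which is not a neighborhood of $\mathbf{0}$ in a vector topology since neighborhoods of $\mathbf{0}$ are absorbing; either way we reach a contradiction.

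Finally, \emph{(ii)} is the result of \cite{Gabr}, which I would simply cite. The concluding assertion follows at once: $L(D)\cong(\VV_\kappa,\pmb{\nu}_\kappa)$ for $\kappa=|D|$ uncountable, and $\pmb{\nu}_\kappa$ lies in the admissible range $\pmb{\tau}_\kappa\subseteq\tau\subseteq\pmb{\nu}_\kappa$, so (i) applies and $L(D)$ is not Ascoli.
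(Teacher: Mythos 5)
Your proposal is correct and follows essentially the same route as the paper: your points $a_F$ are exactly the paper's set $R=\bigcup_n R_n$, you verify condition (iii) of Proposition \ref{p:Ascoli-sufficient} via the same Protasov base $\mathcal{\widehat{N}}_\kappa=\{\conv(V):V\in\mathcal{N}_\kappa\}$ and the same convex-combination trick with the points $\tfrac1n e_j\in S_0$, condition (ii) via Proposition \ref{p:boxtopology-bounded}, and your support-counting argument for the $cs^\ast$- and $cn$-characters mirrors the paper's (which splits the $A_n$ by countable versus uncountable support rather than finite versus infinite, an immaterial variation). The only cosmetic deviations are that your $U_F$ constrain just the coordinates in $F$ (product-open rather than the paper's box neighborhoods $\xxx+W_n$) and you dispense with the pairwise disjointness of the $U(\xxx)$, which the paper records but Proposition \ref{p:Ascoli-sufficient} does not require.
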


\begin{proof}
(i) For every $n\in\NN$, set
\[
R_n =\left\{ \xxx=(x_i)\in \VV_\kappa : |\supp(\xxx)|= n \mbox{ and } x_i=\frac{1}{n^2} \mbox{ for every } i\in\supp(\xxx)\right\}
\]
and $R=\bigcup_{n\in\NN} R_n$. Clearly, $\mathbf{0}\not\in R$.

We claim that $\mathbf{0}\in \overline{R}^{\,\pmb{\nu}_\kappa}$ and hence $\mathbf{0}\in\overline{R}^{\, \tau}$.
Take arbitrarily an open convex neighborhood $W$ of $\mathbf{0}$ in $\pmb{\nu}_\kappa$. Choose a neighborhood $\sum_{k\in\omega} S_k$ of $\mathbf{0}$ in $\pmb{\mu}_\kappa$ such that $\sum_{k\in\omega} S_k \subseteq W$. Since $\kappa$ is uncountable, there is a positive number $c>0$ and an uncountable set $J$ of indices such that $\lambda_j^0 > c$ for all $j\in J$, where the positive numbers $\lambda_j^0 $ define $S_0$. Take $n\in\NN$ with $1/n <c$ and a finite subset $J_0=\{ j_1, \dots, j_n\}$ of $J$. For every $1\leq l\leq n$ we set
\[
\mathbf{x}_l =(x_i^l)_{i\in\kappa}, \ \mbox{ where }  x_i^l = \frac{1}{n} \mbox{ if } i=j_l, \mbox{ and } x_i^l = 0 \mbox{ otherwise}.
\]
So $\mathbf{x}_l \in S_0 \subset \sum_{n\in\omega} S_n \subseteq W$ for every $1\leq l\leq n$. Since $W$ is convex the element
\[
\mathbf{x} := \frac{1}{n} (\mathbf{x}_1 +\cdots + \mathbf{x}_n)
\]
belongs to $W$. By construction, $\mathbf{x}\in R_n$. Thus $\mathbf{0}\in \overline{R}^{\,\pmb{\nu}_\kappa}$ and the claim is proven.

For every $n\in\NN$, set
\[
W_n := \mathbb{V}_\kappa \cap \prod_{i\in \kappa} \left( -\frac{1}{10n^3}, \frac{1}{10n^3}\right) \in \pmb{\tau}_\kappa \subseteq\tau.
\]
Now for every $\xxx\in R_n$, set $U(\xxx) := \xxx+W_n$.  Since
\begin{equation} \label{equ:Ascoli-free}
\frac{1}{n^2}-\frac{1}{10n^3} > \frac{1}{(n+1)^2} + \frac{1}{10(n+1)^3} >0,
\end{equation}
we note that $U(\xxx)\cap U(\yyy)=\emptyset$ for every distinct $\xxx,\yyy\in R$.

To prove that the space $(\mathbb{V}_\kappa, \pmb{\nu}_\kappa)$ is not Ascoli it is enough to show that the families $R$ and $\{ U(\xxx) : \xxx\in R\}$ satisfy conditions (i)--(iii) of Proposition \ref{p:Ascoli-sufficient} with $z=\mathbf{0}$. Clearly, (i) holds and (iii) is true by the claim. Let us check (ii).

Let $K$ be a compact subset of $(\mathbb{V}_\kappa, \tau)$. Then, by Proposition \ref{p:boxtopology-bounded}, there is a finite subfamily $F=\{ i_1,\dots,i_n\}$ of $\kappa$ such that $\supp(K)\subseteq F$. If $\supp(\xxx)\nsubseteq F$, (\ref{equ:Ascoli-free}) implies that $U(\xxx)\cap K=\emptyset$. On the other hand, it is easily seen that the number of $\xxx\in R$ such that  $\supp(\xxx)\subseteq F$ is finite. This means that (ii) of Proposition \ref{p:Ascoli-sufficient} also holds true. Thus $(\mathbb{V}_\kappa, \tau)$ is not Ascoli.

(iii) Let $\{ A_n\}_{n\in\w}$ be a sequence of nonempty subsets of $(\mathbb{V}_\kappa, \tau)$. Set
\[
J_0:=\{ n\in\w: |\supp(A_n)|\leq \w\}, J_1:=\w\setminus J_0 \mbox{ and } I:= \kappa\setminus \bigcup_{n\in J_0} \supp(A_n),
\]
so $I$ is uncountable. Assume that $J_1$ is nonempty and let $n_0<n_1<\dots$ be an enumeration of $J_1$. Fix arbitrarily $\alpha_0 \in I$ such that there is $\xxx_0 =(x^0_\alpha)\in A_{n_0}$ for which $c_0 :=\big| x^0_{\alpha_0}\big| >0$. By induction, for every $0<s<|J_1|$, choose $\alpha_s \in I\setminus\{ \alpha_0,\dots,\alpha_{s-1}\}$ for which there is $\xxx_s =(x^s_\alpha)\in A_{n_s}$ such that $c_s :=\big| x^s_{\alpha_s}\big| >0$. Set
\[
U:= \prod_{s<|J_1|} \left( -c_s,c_s\right) \times \prod_{i\in \kappa\setminus\{ \alpha_s:s<|J_1|\} } \IR_i.
\]
Clearly, $U\in \pmb{\tau}_\kappa \subseteq \tau$ and $\xxx_s\not\in U$ for every $s<|J_1|$. Therefore, if  $A_n \subseteq U$, then $n\in J_0$.

(a) Assume for a contradiction that $(\mathbb{V}_\kappa, \tau)$ has countable $cs^\ast$-character. Take a $cs^\ast$-network $\{ A_n\}_{n\in\w}$ at zero. Since $\kappa$ is uncountable it is clear that $J_1$ must be nonempty. Define a convergent sequence $\mathfrak{s}:=\{ \yyy_n =(y^n_\alpha)\}_{n\in\w}$ in $(\mathbb{V}_\kappa, \tau)$ as follows:
\[
y^n_\alpha := \frac{1}{n+1} \mbox{ if } \alpha=\alpha_0, \mbox{ and } y^n_\alpha :=0 \mbox{  otherwise}.
\]
Clearly, $\yyy_n\to \mathbf{0}$ in $\tau$. If $A_n \subseteq U$, then $n\in J_0$, and therefore $\mathfrak{s}\cap A_n=\emptyset$ since $\alpha_0\in I$. Thus $\{ A_n\}_{n\in\w}$ is not a $cs^\ast$-network at zero, a contradiction.

(b) Suppose for a contradiction that $(\mathbb{V}_\kappa, \tau)$ has countable $cn$-character and let $\{ A_n\}_{n\in\w}$ be a $cn$-network at zero. Then the set
\[
B:=\bigcup \{ A_n : \mathbf{0}\in A_n \subseteq U \} = \bigcup_{n\in J_0} A_n,
\]
is not a neighborhood of zero because $\supp(B)$ is countable. Thus $\{ A_n\}_{n\in\w}$ is not a $cn$-network at zero. This contradiction shows that the $cn$-character of $(\mathbb{V}_\kappa, \tau)$ is uncountable.
\end{proof}

\begin{remark} {\em
Let $\{ G_i\}_{i\in I}$ be an uncountable family of topological groups such that uncountable many of them have a nontrivial convergent sequence (or non-discrete) and let $(G,\tau)$ be the direct sum of this family, where $\tau=\mathcal{T}_f$ or $\tau=\mathcal{T}_b$. A similar proof to (iii) of Theorem \ref{tFreeSpace-Ascoli} shows that $(G,\tau)$ has uncountable $cs^\ast$-character (uncountable $cn$-character, respectively). }
\end{remark}

We do not know whether the space $(\mathbb{V}_\kappa, \tau)$ in Theorem \ref{tFreeSpace-Ascoli} is normal.

In what follows we shall use repeatedly the following standard fact, see \cite[Theorem 7.3.5]{NaB}.
\begin{proposition} \label{p:Finite-direct-summund}
For every finite-dimensional subspace $L$ of a locally convex space $E$ there is a closed linear subspace $H$ such that $E=L\oplus H$.
\end{proposition}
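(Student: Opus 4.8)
The plan is to produce a continuous linear projection $P\colon E\to L$ with range exactly $L$ and then set $H:=\ker P$. Since $L$ is finite-dimensional, I fix an algebraic basis $e_1,\dots,e_n$ of $L$ and let $f_1,\dots,f_n$ be the associated coordinate functionals on $L$, determined by $f_i(e_j)=\delta_{ij}$.

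First I would observe that each $f_i$ is continuous on $L$ equipped with the subspace topology inherited from $E$. This is the one place where the topological hypothesis really matters: a finite-dimensional Hausdorff topological vector space carries a unique vector topology, namely the Euclidean one, and with respect to it every linear functional is continuous. Hence the $f_i$ are continuous linear functionals on the subspace $L$.

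Next, applying the Hahn--Banach extension theorem (valid in any locally convex space) to each $f_i$, I obtain continuous linear functionals $\tilde f_1,\dots,\tilde f_n$ on all of $E$ with $\tilde f_i|_L=f_i$. I then define
\[
P(x):=\sum_{i=1}^n \tilde f_i(x)\,e_i,\qquad x\in E.
\]
Being a finite sum of products of continuous functionals with fixed vectors, $P$ is linear and continuous, and its range is contained in $L$. For $x=\sum_j a_j e_j\in L$ one computes $\tilde f_i(x)=f_i(x)=a_i$, so $P(x)=x$; thus $P|_L=\mathrm{id}_L$, whence $P^2=P$ and $P(E)=L$.

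Finally, I put $H:=\ker P$, which is a closed linear subspace of $E$ by continuity of $P$. For every $x\in E$ the decomposition $x=P(x)+\bigl(x-P(x)\bigr)$ exhibits $x$ as the sum of the element $P(x)\in L$ and the element $x-P(x)\in H$ (indeed $P(x-P(x))=P(x)-P^2(x)=0$), while $L\cap H=\{0\}$ because $x\in L\cap H$ forces $x=P(x)=0$. Hence $E=L\oplus H$, as required. The only delicate step is the continuity of the coordinate functionals in the second paragraph, which rests on the Hausdorffness of $E$ and the uniqueness of the topology on a finite-dimensional space; the construction of $P$ and the verification of the splitting are then entirely routine.
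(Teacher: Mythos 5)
Your proof is correct. The paper does not actually prove this proposition---it is quoted as a standard fact with a reference to \cite[Theorem 7.3.5]{NaB}---and your argument is precisely the standard proof behind that citation: extend the coordinate functionals by Hahn--Banach, build the continuous projection $P$, and take $H=\ker P$. You also correctly isolate the one genuinely topological point, namely that continuity of the coordinate functionals on $L$ rests on the uniqueness of the Hausdorff vector topology in finite dimensions (Hausdorffness is indeed available here, since all spaces in the paper are assumed Tychonoff, and it is essential: in a non-Hausdorff $E$ the proposition fails).
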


Below we prove Theorem \ref{t:Ascoli-directsum-LCS}.
\begin{proof}[Proof of Theorem \ref{t:Ascoli-directsum-LCS}]
For each $i\in \kappa$, by Proposition \ref{p:Finite-direct-summund}, represent $E_i$ in the form $E_i = \IR \oplus \widetilde{E}_i$, where $\widetilde{E}_i$ is a closed subspace of $E_i$. Then
\[
(E,\tau)= (\VV_\kappa, \tau|_{\VV_\kappa}) \times (\widetilde{E}, \tau|_{\widetilde{E}}), \; \mbox{ where } \widetilde{E}:=\bigoplus_{i\in\kappa} \widetilde{E}_i.
\]
As the  direct summand  $(\VV_\kappa, \tau|_{\VV_\kappa})$  of $(E,\tau)$ is not Ascoli by Theorem \ref{tFreeSpace-Ascoli}, the space $(E,\tau)$ is not an Ascoli space by Proposition 5.2 of \cite{BG}. (ii) and (iii) follow from Theorem \ref{tFreeSpace-Ascoli}.
\end{proof}


\section{Proofs of Theorems  \ref{t:Ascoli-c0-bar}, \ref{t:Ck-bar-weakly-Ascoli} and \ref{t:Ascoli-weak-dual}} \label{sec-4}



Recall that a subset $A$ of a locally convex space $E$ is called {\em bounded} if for every neighborhood $U$ of zero there is $\lambda >0$ such that $A\subseteq \lambda U$. We denote by $A^\circ$ the polar of a subset $A$ of $E$. If $(X,Y)$ is a dual pair of vector spaces and $L$ is a linear subspace of $X$ we set $L^\perp :=\{ y\in Y: y(x)=0 \,\forall x\in L\}$. We denote by $X_w$ the space $X$ endowed with the weak topology $\sigma(X,X')$ and set $X'_{w^\ast}:=(X',\sigma(X',X))$.  
Below we prove  Theorem \ref{t:Ascoli-c0-bar}.

\begin{proof}[Proof of  Theorem \ref{t:Ascoli-c0-bar}]
(i) Suppose for a contradiction that there exists an infinite dimensional $\sigma(X',X)$-bounded subset of $X'$. Then $X'$  has an independent and $\sigma(X',X)$-bounded sequence $\{ y_n\}_{n\in\w}$. Clearly, $(1/n)y_n \to 0$ in $\sigma(X',X)$. So the set $K:= \{ 0\}\cup \{ (1/n)y_n\}_{n\in\w}$ is compact in $\sigma(X',X)$. Let us show that $(1/n^2)y_n \to 0$ in $\CC\big(X_w\big)$. Indeed, fix a standard neighborhood $[C,\e]$ of zero in $\CC\big(X_w\big)$, where $C$ is a $\sigma(X,X')$-compact subset of $X$, $\e>0$ and
\[
[C,\e]:=\big\{ f\in \CC\big(X_w\big): |f(x)|<\e \mbox{ for every } x\in C\big\} .
\]
Since $X$ is $c_0$-barrelled, $K$ is equicontinuous, and hence the polar $K^\circ$ of $K$ is a neighborhood of zero in $X$. 
As $C$ is weakly compact it is bounded in $X$. So there is $m>0$ such that $C\subseteq m K^\circ$, and hence $K\subseteq (1/m)C^\circ \subseteq (2/m\e)[C,\e]$.  Since $[C,\e]$ is absolutely convex we obtain
\[
\frac{1}{n^2} y_n= \frac{1}{n}\left( \frac{1}{n} y_n\right) \in [C,\e] \; \mbox{ for every } n> \frac{m\e}{2}.
\]
Thus $(1/n^2)y_n \to 0$ in $\CC\big(X_w\big)$. Since $X_w$ is Ascoli to get a contradiction it is sufficient to show that the compact set  $\KK:=\{ 0\}\cup \{ (1/n^2)y_n\}_{n\in\w} \subseteq \CC\big(X_w\big)$ is not equicontinuous at zero, see  Lemma \ref{l:Ascoli-free-evenly}(iv).

Let $U$ be  a basic neighborhood of zero $0\in X_w$. So there are $\delta>0$ and $z_1,\dots,z_n\in X'$ such that
\[
U=\{ x\in X : \, |z_i(x)|< \delta \mbox{ for } i=1,\dots,n \}.
\]
Denote by $L$ the span of the vectors $z_1,\dots,z_n$ in $X'$. Then $L$ is a $\sigma(X',X)$-closed finite-dimensional subspace of $X'$. Hence there is a closed subspace $H$ of $X'_{w^\ast}:=(X',\sigma(X',X))$ such that $X'_{w^\ast} = L\oplus H$, see Proposition \ref{p:Finite-direct-summund}. Since $L$ is finite dimensional and the $y_n$ are independent, there is $n$ such that $(1/n^2)y_n \not\in L$. As $L=L^{\perp\perp}$ we obtain that $(1/n^2)y_n(x)=a \not= 0$ for some $x\in L^\perp \subseteq X$. Finally, since $(1/a)x\in L^\perp \subseteq U$ and $(1/n^2)y_n \big( (1/a)x\big)=1$ we obtain that $\KK$ is not equicontinuous at $0$. This contradiction shows that every $\sigma(X',X)$-bounded subset of $X'$ must be finite dimensional.

(ii) To prove that $\tau=\sigma(X,X')$ we have to show that every $\tau$-neighborhood $U$ of zero in $X$ is also a weak neighborhood of zero. Note that $\tau$ is the polar topology determined by equicontinuous subsets of $X'$ by \cite[Theorem 8.6.6]{NaB}, and every equicontinuous subset of $X'$ is $\sigma(X',X)$-bounded by \cite[Theorem 8.6.5]{NaB}. So $U$ contains a neighborhood of zero of the form $A^\circ$, where $A$ is a $\sigma(X',X)$-bounded subset of $X'$. We proved in (i) that $A$ is finite dimensional. Hence, as in the previous paragraph, there is a finite dimensional subspace $L_A$ of $X'$, a $\sigma(X',X)$-closed subspace $H_A$ of $X'$ and a standard compact neighborhood $W_A =\prod_{1\leq i\leq \dim(L_A)} [-a_i,a_i] $ of zero in $L_A$ such that
\[
(X',\sigma(X',X))=L_A\oplus H_A \mbox{ and } A\subseteq W_A \times \{ 0\}.
\]
Set $ M:= H_A^\perp, G:=L_A^\perp$, so $X=M\oplus G$. Put  $n:=\dim(L_A)$, $B:=W_A \times \{ 0\}$ and define $F=\{ z_1,\dots,z_n\} \subseteq X'$ by
\[
z_i (m+g):= \frac{1}{a_i} m_i, \mbox{ where } m=(m_1,\dots,m_n)\in M \mbox{ and } g\in G.
\]
Then $F^{\circ}=B^\circ \subseteq A^\circ \subseteq U$. Thus $\tau =\sigma(X,X')$.

(iii) It is well known that for every locally convex space $E$ the space $E_w$ is linearly homeomorphic to a dense subspace of $\IR^\Gamma$, where $\Gamma$ is a Hamel base of $E'$.  As $X=X_w$ by (ii), the assertion follows.
\end{proof}

\begin{proof}[Proof of Corollary \ref{c:Ascoli-dual-Frechet}]
If $X$ is weakly Ascoli, the completness of $X$ and Theorem  \ref{t:Ascoli-c0-bar} imply that $X=\IR^\Gamma$, where $\Gamma$ is a Hamel base of $X'$. Conversely, if $X=\IR^\Gamma $, then $X=X_w$ and $X$ is even a $k_\IR$-space  by \cite{Nob}.
\end{proof}

\begin{example} {\em
The space $\phi$ is barrelled  and complete. Clearly, $\phi \not= \IR^\Gamma$ for every set $\Gamma$. Therefore the space $\phi_w$ is not Ascoli by Corollary \ref{c:Ascoli-dual-Frechet}.}
\end{example}

\begin{corollary} \label{c:weak-Ascoli-closure}
Let $(X,\tau)$ be a locally convex space such that its completion $(\overline{X},\overline{\tau})$ is $c_0$-barrelled. If $(X,\sigma(X,X'))$ is Ascoli, then  $\tau=\sigma(X,X')$ and $(\overline{X}, \sigma(\overline{X},X'))$ is an Ascoli space.
\end{corollary}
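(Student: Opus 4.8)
The plan is to mimic the proof of Theorem \ref{t:Ascoli-c0-bar}, replacing the hypothesis that $X$ itself is $c_0$-barrelled by the weaker hypothesis on its completion, and then to identify the completion explicitly. First I would record the two standard facts that make the transfer possible. Since $X$ is dense in $(\overline X,\overline\tau)$, restriction of functionals is a linear bijection $\overline X'\to X'$, so we may identify $\overline X'=X'$; and a set $E\subseteq X'$ is $\tau$-equicontinuous in $(X,\tau)'$ if and only if it is $\overline\tau$-equicontinuous in $(\overline X,\overline\tau)'$ (if $U^\circ\supseteq E$ with $U$ a $\tau$-neighbourhood of $0$ in $X$, then the $\overline\tau$-closure $\overline U$ is a $\overline\tau$-neighbourhood with the same polar, and conversely). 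In particular, every $\tau$-equicontinuous subset of $X'$ is $\sigma(\overline X',\overline X)$-bounded.

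Next I would prove the analogue of Theorem \ref{t:Ascoli-c0-bar}(i): every $\tau$-equicontinuous subset of $X'$ is finite-dimensional. Assuming not, pick an independent sequence $\{y_n\}$ inside such a set. By the previous paragraph $\{y_n\}$ is $\sigma(\overline X',\overline X)$-bounded, so $(1/n)y_n\to 0$ in $\sigma(\overline X',\overline X)$; as $\overline X$ is $c_0$-barrelled, $K:=\{0\}\cup\{(1/n)y_n\}$ is $\overline\tau$-equicontinuous, hence its polar $K^\circ$ is a $\tau$-neighbourhood of $0$ in $X$. From here the argument of Theorem \ref{t:Ascoli-c0-bar}(i) applies to $X_w:=(X,\sigma(X,X'))$ essentially verbatim: every $\sigma(X,X')$-compact $C\subseteq X$ is bounded, hence absorbed by $K^\circ$, which forces $(1/n^2)y_n\to 0$ in $\CC(X_w)$; and the set $\KK:=\{0\}\cup\{(1/n^2)y_n\}$, compact in $\CC(X_w)$, fails to be equicontinuous at $0$ because the $y_n$ are independent. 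This contradicts the Ascoli property of $X_w$ via Lemma \ref{l:Ascoli-free-evenly}(iv). With equicontinuous subsets of $X'$ now known to be finite-dimensional, the proof of Theorem \ref{t:Ascoli-c0-bar}(ii) (where $\tau$ is the polar topology of its equicontinuous subsets, each of which splits off by Proposition \ref{p:Finite-direct-summund}) yields $\tau=\sigma(X,X')$.

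Finally I would identify the completion. Since $\tau=\sigma(X,X')$, the space $(\overline X,\overline\tau)$ is the completion of $(X,\sigma(X,X'))$, which is well known to be $\big((X')^\ast,\sigma((X')^\ast,X')\big)$, the algebraic dual of $X'$ with its weak$^\ast$ topology; choosing a Hamel base $\Gamma$ of $X'$ identifies this linearly and topologically with $\IR^\Gamma$. As $\overline X'=X'$, the weak topology $\sigma(\overline X,X')$ coincides with $\overline\tau$, so $(\overline X,\sigma(\overline X,X'))=\IR^\Gamma$, which is a $k_\IR$-space by \cite{Nob} and therefore Ascoli.

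The main obstacle is the first transfer step: a $\sigma(X',X)$-bounded (or even $\sigma(X',X)$-null) sequence in $X'$ need not be $\sigma(\overline X',\overline X)$-bounded, so one cannot simply invoke $c_0$-barrelledness of $\overline X$ for arbitrary weak$^\ast$-null sequences, and in particular one cannot hope to show that $X$ itself is $c_0$-barrelled. The point that rescues the argument is that it only ever needs to be run for \emph{equicontinuous} sets, and equicontinuity (unlike weak$^\ast$-boundedness) does pass between $X$ and $\overline X$, delivering the required $\sigma(\overline X',\overline X)$-boundedness for free.
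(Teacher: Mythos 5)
Your proof is correct, but it takes a genuinely different route from the paper's. The paper's proof runs in the opposite order: it first observes that $\overline{X}'=X'$, so $(X,\sigma(X,X'))$ is a \emph{dense} subspace of $(\overline{X},\sigma(\overline{X},X'))$, and invokes Lemma 2.7 of \cite{GGKZ} (the Ascoli property lifts from a dense subspace to the whole space) to conclude at once that $(\overline{X},\sigma(\overline{X},X'))$ is Ascoli; it then applies Theorem \ref{t:Ascoli-c0-bar} as a black box to the $c_0$-barrelled, now weakly Ascoli, space $\overline{X}$ to get $\overline{\tau}=\sigma(\overline{X},X')$, and finally restricts: $\tau=\overline{\tau}|_X=\sigma(X,X')$. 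You instead keep all the work on $X$ itself: since $X$ need not be $c_0$-barrelled, you reopen the proof of Theorem \ref{t:Ascoli-c0-bar} and observe that it only ever needs equicontinuity of the relevant weak${}^\ast$-null sequence, and that equicontinuity (unlike weak${}^\ast$-boundedness or weak${}^\ast$-convergence, as you rightly point out) transfers between $X$ and its completion; this lets the $c_0$-barrelledness of $\overline{X}$ do the job, yielding $\tau=\sigma(X,X')$ first, after which you identify $(\overline{X},\overline{\tau})$ with the completion $\big((X')^{\ast},\sigma((X')^{\ast},X')\big)=\IR^{\Gamma}$ of the weak space and conclude Ascoli-ness from Noble's $k_\IR$-result. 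What each approach buys: the paper's proof is shorter and fully modular, but hinges on the nontrivial external density-lifting lemma for the Ascoli property; yours avoids that lemma entirely, is self-contained modulo the classical (Grothendieck-type) description of the completion of a weak topology -- which you should cite explicitly -- and actually delivers the stronger explicit conclusion $\overline{X}=\IR^{\Gamma}$, in effect re-proving the forward direction of Corollary \ref{c:Ascoli-dual-Frechet} along the way, at the cost of having to adapt the proof of Theorem \ref{t:Ascoli-c0-bar} rather than cite it.
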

\begin{proof}
Note that $\overline{X}'=X'$. So $(X,\sigma(X,X'))$ is a dense subspace of $(\overline{X},\sigma(\overline{X},X'))$, and hence $(\overline{X},\sigma(\overline{X},X'))$  is an Ascoli space by Lemma 2.7 of \cite{GGKZ}. Therefore the topology $\overline{\tau}$ of $\overline{X}$ coincides with $\sigma(\overline{X},X')$ by Theorem  \ref{t:Ascoli-c0-bar}. Thus $\tau=\overline{\tau}|_X  =\sigma(X,X')$.
\end{proof}

Corollaries \ref{c:Ascoli-dual-Frechet} and \ref{c:weak-Ascoli-closure} easily implies the following result proved in \cite[Theorem 6.1.1]{Banakh-Survey}.
\begin{corollary}[\cite{Banakh-Survey}] \label{c:Ascoli-dual-Banach-Taras}
A metrizable locally convex space $(X,\tau)$ is weakly Ascoli if and only if it is weakly metrizable.
\end{corollary}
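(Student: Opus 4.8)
The plan is to prove both implications quickly by feeding the two preceding corollaries the right spaces, since all of the real content already sits in those corollaries. For the forward direction, assume $(X,\tau)$ is metrizable and weakly Ascoli. First I would pass to the completion $(\overline{X},\overline{\tau})$, which is a Fr\'{e}chet space (a complete metrizable locally convex space) and hence is barrelled, so in particular $c_0$-barrelled; this is exactly the hypothesis needed to invoke Corollary \ref{c:weak-Ascoli-closure}. Applying that corollary, using that $(X,\sigma(X,X'))$ is Ascoli, yields two conclusions at once: $\tau=\sigma(X,X')$, and $(\overline{X},\sigma(\overline{X},X'))$ is again an Ascoli space.

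Next I would apply Corollary \ref{c:Ascoli-dual-Frechet} to $\overline{X}$. Being complete, $c_0$-barrelled and weakly Ascoli, it must have the form $\overline{X}=\IR^\Gamma$ for some $\Gamma$. Since the completion of a metrizable space is itself metrizable, $\IR^\Gamma$ is metrizable, which forces $\Gamma$ to be countable; thus $\overline{X}=\IR^N$ for some $N\leq\w$, and here the original topology $\overline{\tau}$ agrees with $\sigma(\overline{X},X')$ and is metrizable. As $(X,\sigma(X,X'))$ is a subspace of $(\overline{X},\sigma(\overline{X},X'))$, it too is metrizable, i.e. $X$ is weakly metrizable. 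I note that one could in fact stop after Corollary \ref{c:weak-Ascoli-closure}: since it already gives $\tau=\sigma(X,X')$ and $(X,\tau)$ is metrizable by hypothesis, $(X,\sigma(X,X'))$ is metrizable outright; the detour through Corollary \ref{c:Ascoli-dual-Frechet} simply identifies the completion explicitly as $\IR^N$.

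For the converse, suppose $X$ is weakly metrizable, so that $(X,\sigma(X,X'))$ is a metrizable topological space. Every metrizable space is a $k$-space, and every $k$-space is Ascoli by Ascoli's theorem, as recorded in the diagram of the Introduction. Hence $(X,\sigma(X,X'))$ is Ascoli, that is, $X$ is weakly Ascoli, which completes the equivalence.

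I do not expect a serious obstacle here, since the argument is purely a combination of the two cited corollaries. The only points that need care are verifying that the completion is $c_0$-barrelled so that Corollary \ref{c:weak-Ascoli-closure} may be applied (immediate, as Fr\'{e}chet $\Rightarrow$ barrelled $\Rightarrow$ $c_0$-barrelled) and the elementary observation that a metrizable power $\IR^\Gamma$ must have $\Gamma$ countable.
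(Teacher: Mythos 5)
Your proof is correct and takes essentially the same route as the paper: pass to the Fr\'{e}chet completion, apply Corollary \ref{c:weak-Ascoli-closure} (valid since Fr\'{e}chet $\Rightarrow$ barrelled $\Rightarrow$ $c_0$-barrelled) to get that $\overline{X}_w$ is Ascoli and $\tau=\sigma(X,X')$, then Corollary \ref{c:Ascoli-dual-Frechet} to identify $\overline{X}=\IR^N$ with $N\leq\w$, and handle the converse via metrizable $\Rightarrow$ $k$-space $\Rightarrow$ Ascoli. Your observation that one can stop right after Corollary \ref{c:weak-Ascoli-closure} (since $\tau=\sigma(X,X')$ and $\tau$ is metrizable by hypothesis) is a valid small shortcut, but it does not change the substance of the argument.
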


\begin{proof}
Note that the closure $\overline{X}$ of $X$ is a Fr\'{e}chet space and $\overline{X}'=X'$. Assume that $X_w$ is Ascoli. Then $\overline{X}_w$ is an Ascoli space by Corollary \ref{c:weak-Ascoli-closure}. Therefore, by Corollary \ref{c:Ascoli-dual-Frechet}, $\overline{X}_w =\overline{X} =\IR^N$ for some $N\leq\w$. Thus $X=X_w$ and $X_w$ is metrizable. The converse assertion follows from the Ascoli theorem, see \cite{Eng}.
\end{proof}

To prove Corollary \ref{c:weak-bar-c0-bar} we characterize below  locally convex spaces which are weakly barreled. To this end we need the following easy lemma. 

\begin{lemma} \label{l:funct-boun-boun}
Every functionally bounded subset of  a locally convex space $E$ is bounded.
\end{lemma}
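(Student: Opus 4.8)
The statement to prove is Lemma \ref{l:funct-boun-boun}: every functionally bounded subset of a locally convex space $E$ is bounded.

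\medskip

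The plan is to argue by contraposition, or equivalently directly via the standard functional-analytic characterization of boundedness. Recall that a subset $A \subseteq E$ is bounded if and only if every continuous seminorm on $E$ (equivalently, every member of a generating family of seminorms) is bounded on $A$; this is immediate from the definition via absorption of neighborhoods of zero. Since $E$ is locally convex, its topology is generated by a family of continuous seminorms, so it suffices to show that every continuous seminorm $p$ on $E$ is bounded on $A$ whenever $A$ is functionally bounded.

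\medskip

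First I would fix a continuous seminorm $p$ on $E$ and observe that $p$ itself is a continuous real-valued function on $E$. The key point is that functional boundedness means \emph{every} continuous real-valued function on $E$ is bounded on $A$. Applying this to the continuous function $p$ directly yields that $\sup_{x \in A} p(x) < \infty$. Since this holds for every continuous seminorm $p$ in a family generating the topology of $E$, the set $A$ absorbs into every basic neighborhood of zero: given a neighborhood $U$ of zero, we may assume $U = \{x : p(x) < 1\}$ for some continuous seminorm $p$, and taking $\lambda > \sup_{x \in A} p(x)$ gives $A \subseteq \lambda U$. Hence $A$ is bounded.

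\medskip

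I expect this lemma to be genuinely routine rather than to present any real obstacle; the only subtlety worth flagging is to confirm that a continuous seminorm is indeed a continuous real-valued function in the sense used by the definition of functional boundedness (it is, since seminorms are nonnegative and continuity of $p$ as a map $E \to \IR$ is exactly what makes the basic sets $\{p < \e\}$ open). The essential content is simply that local convexity provides enough continuous functions—namely the generating seminorms—to witness boundedness, and functional boundedness controls all of them at once. No compactness or completeness hypotheses are needed, and the argument is symmetric in all the seminorms, so I would write it compactly in two or three lines.
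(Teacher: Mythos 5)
Your proof is correct, and it takes a genuinely different route from the paper's. You argue directly: a continuous seminorm is itself a continuous real-valued function on $E$, so functional boundedness immediately bounds every generating seminorm on $A$, and since the sets $\{x : p(x)<1\}$ form a base of neighborhoods of zero in a locally convex space, absorption follows at once. The paper instead argues by contradiction: assuming $A$ is not bounded, it extracts a sequence $a_n \in \lambda_{n+1}U \setminus 2\lambda_n U$ with rapidly growing scalars $\lambda_n$ and shows that the translates $a_n + U$ are pairwise disjoint, so the sequence is $U$-uniformly discrete; this contradicts functional boundedness via the (implicit, standard) fact that along a uniformly discrete sequence one can build a continuous function taking the value $n$ at $a_n$. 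Your argument is shorter and fully self-contained, exploiting local convexity exactly where it is available — in the seminorm description of the topology. The paper's argument is more topological in flavor and is the one that generalizes: with balanced neighborhoods in place of absolutely convex ones, the uniform-discreteness construction shows that functionally bounded sets are bounded in an arbitrary topological vector space, where your seminorm argument would have nothing to work with. For the lemma as stated (locally convex $E$), your proof is the natural one; the only point worth making explicit is that it suffices to check absorption on a base of neighborhoods, which you do implicitly with the phrase ``we may assume $U=\{x: p(x)<1\}$.''
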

\begin{proof}
Suppose for a contradiction that there is a  functionally bounded subset $A$ of $E$ which is not bounded. So there is an open absolutely convex neighborhood $U$ of $0\in E$ such that $A \nsubseteq\lambda U$ for every $\lambda>0$. Set $\lambda_0 =1$ and choose $a_0\in A$ such that $a_0\not\in 2\lambda_0 U$. Take $\lambda_1>2\lambda_0$ such that $a_0\in \lambda_1 U$. Choose $a_1\in A$ such that $a_1 \not\in 2\lambda_1 U$, and take $\lambda_2 >2\lambda_1 $ such that $a_1 \in \lambda_2 U$. Continuing this process we find a sequence $\{ a_n\}_{n\in\w}$ in $A$ and a sequence $\{ \lambda_n\}_{n\in\w}$ of positive numbers such that
\[
a_n \in \lambda_{n+1} U\setminus 2\lambda_n U \; \mbox{ and } 2 \leq 2\lambda_n < \lambda_{n+1}, \quad n\in \w.
\]
To get a contradiction it is sufficient to show that $\{ a_n\}_{n\in\w}$ is $U$-uniformly discrete, namely, if $ n<m$, then $(a_n+U) \cap (a_m +U)=\emptyset$. Indeed, if $a_n+u=a_m+v$, then $\lambda_{n+1}> 2$ and
\[
a_m=a_n+u-v\in (2+\lambda_{n+1})U\subseteq 2\lambda_{n+1} U
\]
that contradicts the choice of $a_m$.
\end{proof}

\begin{theorem} \label{t:weak-barrelled}
For a locally convex space $E$ the following assertions are equivalent:
\begin{enumerate}
\item[{\rm (i)}] $E_w$ is barrelled;
\item[{\rm (ii)}]  every bounded subset of $E'_{w^\ast}$ is finite-dimensional;
\item[{\rm (iii)}] every  functionally  bounded subset of $E'_{w^\ast}$ is finite-dimensional.
\end{enumerate}
\end{theorem}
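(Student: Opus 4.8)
**The plan is to prove Theorem \ref{t:weak-barrelled} by establishing the cycle of implications (i) $\Rightarrow$ (ii) $\Rightarrow$ (iii) $\Rightarrow$ (i), exploiting the duality between barrelledness of $E_w$ and equicontinuity of barrels in the dual pairing $(E,E')$.**

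First I would recall the standard duality fact that a locally convex space $F$ is barrelled if and only if every $\sigma(F',F)$-bounded subset of $F'$ is equicontinuous. Applied to $F = E_w$, whose dual is again $E'$ but whose topology is $\sigma(E,E')$, the equicontinuous subsets of $E'$ with respect to the weak topology are precisely the finite-dimensional (more accurately, the relatively $\sigma(E',E)$-compact subsets contained in finite-dimensional subspaces, i.e.\ the polars of weak neighborhoods of zero, which are determined by finitely many functionals). Thus a $\sigma(E',E)$-bounded set is $\sigma(E',E)$-equicontinuous exactly when it sits inside a finite-dimensional subspace. This observation immediately yields the equivalence (i) $\Leftrightarrow$ (ii): $E_w$ is barrelled iff every $\sigma(E',E)$-bounded (equivalently, bounded in $E'_{w^\ast}$) subset is equicontinuous, and in the weak topology equicontinuity of a bounded set is the same as being finite-dimensional.

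For (ii) $\Rightarrow$ (iii) the direction is trivial, since every functionally bounded subset is bounded (this is Lemma \ref{l:funct-boun-boun}), so a functionally bounded set that happens to be finite-dimensional is automatic once all bounded sets are finite-dimensional. The nontrivial arrow is (iii) $\Rightarrow$ (ii), and here I would invoke Lemma \ref{l:funct-boun-boun} in the contrapositive spirit: one must show that if every functionally bounded subset of $E'_{w^\ast}$ is finite-dimensional, then so is every bounded subset. The cleanest route is to argue that in $E'_{w^\ast}$ a bounded set and a functionally bounded set have the same ``finite-dimensionality'' footprint, because Lemma \ref{l:funct-boun-boun} shows functionally bounded $\Rightarrow$ bounded, while conversely any infinite-dimensional bounded set would contain an infinite-dimensional functionally bounded subset, contradicting (iii).

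The main obstacle, and the step deserving the most care, is this last implication (iii) $\Rightarrow$ (ii): producing, from an infinite-dimensional \emph{bounded} subset $B$ of $E'_{w^\ast}$, an infinite-dimensional \emph{functionally} bounded subset. The natural approach is to extract from $B$ a linearly independent sequence $\{y_n\}$ and observe that any bounded set in $E'_{w^\ast}$ has all its $\sigma(E',E)$-bounded subsets functionally bounded, since a weakly bounded set is weakly totally bounded-on-finite-dimensional-projections; more directly, one shows that the span-intersection argument used in the proof of Theorem \ref{t:Ascoli-c0-bar}(i) applies here, using Proposition \ref{p:Finite-direct-summund} to split off finite-dimensional pieces of $E'_{w^\ast}$ and thereby exhibit an infinite independent functionally bounded sequence inside $B$. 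I expect the delicate point to be verifying that boundedness in $\sigma(E',E)$ transfers to functional boundedness on the relevant subspaces without additional completeness or barrelledness hypotheses on $E$ itself, so the argument must stay strictly within the weak dual pairing and lean on Lemma \ref{l:funct-boun-boun} rather than on any structure of $E$.
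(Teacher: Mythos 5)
Your treatment of (i)$\Leftrightarrow$(ii) and of (ii)$\Rightarrow$(iii) is correct and agrees in substance with the paper: the paper also reduces (i)$\Leftrightarrow$(ii) to the Banach--Steinhaus characterization of barrelledness (every $\sigma(E',E)$-bounded subset of $E'$ is equicontinuous) together with the fact, proved via Proposition \ref{p:Finite-direct-summund}, that the $E_w$-equicontinuous subsets of $E'$ are exactly the bounded subsets of finite-dimensional subspaces; and (ii)$\Rightarrow$(iii) is immediate from Lemma \ref{l:funct-boun-boun}.

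The genuine gap is in (iii)$\Rightarrow$(ii): both mechanisms you propose there are false. It is not true that a $\sigma(E',E)$-bounded subset of $E'_{w^\ast}$ is functionally bounded, nor that an infinite-dimensional bounded set must \emph{contain} an infinite-dimensional functionally bounded subset. For a counterexample to both claims, take $E=c_{00}$ (the space of finitely supported sequences) with the supremum norm; then $E'=\ell^1$ and $\sigma(\ell^1,c_{00})$ is the topology of coordinatewise convergence, so $E'_{w^\ast}$ is a metrizable space (a subspace of $\IR^\w$), and in a metrizable space an infinite closed discrete subset is never functionally bounded (it is $C$-embedded by the Tietze theorem). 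The independent set $A=\{e_1+\dots+e_N:\ N\in\NN\}$ is $\sigma(\ell^1,c_{00})$-bounded, but its only coordinatewise cluster point $(1,1,1,\dots)$ lies outside $\ell^1$, so $A$ and all of its infinite subsets are closed and discrete in $E'_{w^\ast}$; hence $A$ is not functionally bounded and every functionally bounded subset of $A$ is finite. Moreover, the fragment of the proof of Theorem \ref{t:Ascoli-c0-bar}(i) that you invoke --- splitting off finite-dimensional subspaces via Proposition \ref{p:Finite-direct-summund} --- is the non-equicontinuity part of that argument and produces no functional boundedness. The idea actually needed (used both in the paper's proof of this theorem and in the proof of Theorem \ref{t:Ascoli-c0-bar}(i)) is a scaling trick that your proposal never states: from an infinite-dimensional bounded set $A$ choose an independent sequence $\{a_n\}_{n\in\w}$; since $A$ is pointwise bounded on $E$, one has $\frac{1}{n}a_n\to 0$ in $\sigma(E',E)$, so the set $\{0\}\cup\{\frac{1}{n}a_n: n\in\NN\}$ is weak${}^\ast$-compact, hence functionally bounded, and it is infinite-dimensional, contradicting (iii). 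Note that this set is obtained by rescaling elements of $A$ and need not lie inside $A$ --- which is exactly why any ``containment'' formulation such as yours cannot work, as the example above shows.
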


\begin{proof}
(i)$\Rightarrow$(ii) Suppose for a contradiction that $E'_{w^\ast}$ has an infinite-dimensional bounded subset. Then $E'$ has an infinite independent weak${}^\ast$ bounded subset $A$. As $E_w$ is barrelled, $A$ is equicontinuous by Theorem 11.3.4 of \cite{NaB}, and hence the polar $A^\circ$ of $A$ is a weak neighborhood of zero in $E$. So there is a finite $F\subseteq E'$  such that $F^\circ \subseteq A^\circ$. Since $A$ is infinite there exists $a\in A$ such that $\spn\big\{ F\cup\{a\}\big\}= \spn\{ F\} \oplus \spn\{ a\}$. By Proposition \ref{p:Finite-direct-summund}, there is a closed subspace $M$ of $E'_{w^\ast}$ such that
\[
E'_{w^\ast} = \spn\{ F\} \oplus \spn\{ a\} \oplus M.
\]
As $(E'_{w^\ast})' =E$, there exists $x\in E$ such that $a(x)\not= 0$ and $\chi(x)=0$ for every $\chi\in \spn\{ F\} \oplus M$. Clearly, $\spn\{ x\} \subseteq F^\circ$. However $\spn\{ x\} \nsubseteq A^\circ$ because the set $\{ a(x):a\in A\}$ is a bounded subset of $\IR$.  Hence $F^\circ \nsubseteq A^\circ$, a contradiction.

(ii)$\Rightarrow$(i) Let $B$ be a bounded subset of $E'_{w^\ast}$. So $B$ is contained in a finite-dimensional subspace $L$ of $E'_{w^\ast}$. By  Proposition \ref{p:Finite-direct-summund}, there is a closed subspace $M$ of $E'_{w^\ast}$ such that $E'_{w^\ast}=L\oplus M$. From this and the fact $(E'_{w^\ast})'=E$ it easily follows that $B$ is equicontinuous. Therefore $E_w$ (and $E$ as well) is barreled by the Banach--Steinhaus theorem.


(ii)$\Rightarrow$(iii) follows from Lemma \ref{l:funct-boun-boun}. Let us prove (iii)$\Rightarrow$(ii). Suppose for a contradiction that there is an infinite  bounded independent subset $A$  of $E'_{w^\ast}$. Choose  a one-to-one sequence $\{ a_n\}_{n\in\w}$ in $A$. Then $\frac{1}{n} a_n$ converges to zero in $E'_{w^\ast}$, so $\{ \frac{1}{n} a_n\}_{n\in\w}\cup\{ 0\}$ is a compact and hence  functionally bounded subset of $E'_{w^\ast}$. Therefore $\{ \frac{1}{n} a_n\}_{n\in\w}$ is an infinite independent functionally bounded subset of  $E'_{w^\ast}$, a contradiction.
\end{proof}

\begin{proof}[Proof of Corollary \ref{c:weak-bar-c0-bar}]
It is sufficient to show that every $c_0$-barrelled weakly Ascoli space $X$ is barrelled. By Theorem \ref{t:Ascoli-c0-bar}, $X=X_w$ and every bounded subset of $X_w$ is finite dimensional. Therefore $X=X_w$ is barrelled by Theorem \ref{t:weak-barrelled}.
\end{proof}


Now we consider the next question: When a barrelled space $\CC(X)$ is weakly Ascoli? We need two assertions.

\begin{lemma} \label{l:Ascoli-weak-dual}
Let $\{ y_n\}_{n\in\w}$ be an independent sequence in a locally convex space $E$. Then for every finite subset $\{ z_0,\dots,z_m\}$ of $E'$ there are $a_0,\dots,a_{m+1}\in \IR$ such that
\[
0\not= a_0 y_0+\cdots +a_{m+1} y_{m+1} \in \bigcap_{i=0}^m \ker(z_i).
\]
\end{lemma}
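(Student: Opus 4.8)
The plan is to reduce this to an elementary dimension count, using only the first $m+2$ vectors $y_0,\dots,y_{m+1}$ of the independent sequence. The point of the index bookkeeping in the statement is precisely that we are allowed $m+2$ free coefficients $a_0,\dots,a_{m+1}$ against only $m+1$ linear constraints $z_0,\dots,z_m$, so a nonzero solution must exist for purely linear-algebraic reasons.

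Concretely, I would define the linear map
\[
\Phi:\IR^{m+2}\to\IR^{m+1},\qquad
\Phi(a_0,\dots,a_{m+1}):=\Big( z_0(v),\dots, z_m(v)\Big),\quad\text{where } v:=\sum_{i=0}^{m+1} a_i y_i .
\]
This $\Phi$ is linear, since $v$ depends linearly on the coefficient tuple $(a_0,\dots,a_{m+1})$ and each $z_j$ is linear on $E$. The domain $\IR^{m+2}$ has dimension $m+2$, strictly larger than the dimension $m+1$ of the codomain, so by rank--nullity the kernel of $\Phi$ is nontrivial. Hence I may choose a \emph{nonzero} tuple $(a_0,\dots,a_{m+1})\in\ker\Phi$.

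It then remains to verify the two required conclusions. First, by the definition of $\Phi$, the vector $v=\sum_{i=0}^{m+1}a_i y_i$ satisfies $z_j(v)=0$ for every $j=0,\dots,m$, i.e.\ $v\in\bigcap_{i=0}^m\ker(z_i)$. Second, because $(a_0,\dots,a_{m+1})$ is a nonzero coefficient tuple and the vectors $y_0,\dots,y_{m+1}$ are linearly independent (being an initial segment of the independent sequence $\{y_n\}_{n\in\w}$), the combination $v$ cannot vanish; thus $v\neq 0$, as required.

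There is no genuine obstacle here: the argument is a single application of the pigeonhole/rank--nullity principle, and the only thing to watch is the index arithmetic — one uses $m+2$ coefficients so that the number of unknowns strictly exceeds the number $m+1$ of homogeneous conditions, guaranteeing a nonzero solution, and one invokes linear independence of the $y_i$ only at the very end to upgrade ``nonzero coefficients'' to ``nonzero vector''.
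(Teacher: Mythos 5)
Your proof is correct and is essentially identical to the paper's: the paper also defines the linear map $T:\IR^{m+2}\to\IR^{m+1}$ given by the matrix $\big(z_i(y_k)\big)$, notes $\ker(T)\neq 0$, and takes the resulting nonzero coefficient tuple. If anything, you are slightly more explicit than the paper in spelling out that linear independence of $y_0,\dots,y_{m+1}$ is what upgrades a nonzero coefficient tuple to a nonzero vector $v$.
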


\begin{proof}
Consider the map $T:\IR^{m+2} \to \IR^{m+1}$ defined by
\[
T(a_0,\dots,a_{m+1}):= A\cdot (a_0,\dots,a_{m+1}), 
\]
where $A$ is the matrix $A:=\big(z_i(y_k)\big)_{i,k}$, $0\leq i\leq m$, $0\leq k\leq m+1$. Since $\ker(T)\not= 0$ there are $a_0,\dots,a_{m+1}\in \IR$ such that
\[
(a_0,\dots,a_{m+1})\in \ker(T)\setminus \{ 0\}.
\]
Then the vector $v:=a_0 y_0+\cdots +a_{m+1} y_{m+1}$ is as desired.
\end{proof}

The following proposition is probably known, but hard to find explicitly stated. So, for the sake of completeness, we give its complete proof. 
\begin{proposition} \label{p:Ascoli-Cp-Ck}
Let $X$ be a Tychonoff space. Then the compact-open topology $\tau_k$ on $C(X)$ coincides with the weak topology $\tau_w$ of $\CC(X)$ if and only if every compact subset of $X$ is finite. In this case $\tau_k=\tau_p=\tau_w$.
\end{proposition}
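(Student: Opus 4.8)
The plan is to exploit the chain of topologies $\tau_p\le\tau_w\le\tau_k$ on $C(X)$ and to locate precisely where the gap between $\tau_w$ and $\tau_k$ can close. Both inclusions are standard: $\tau_w\le\tau_k$ because the weak topology of any locally convex space is coarser than its own topology, and $\tau_p\le\tau_w$ because each point evaluation $f\mapsto f(x)$ is $\tau_k$-continuous (indeed $|f(x)|\le\sup_{\{x\}}|f|$ with $\{x\}$ compact), so these functionals, which define $\tau_p$, lie inside the dual $\CC(X)'$ that defines $\tau_w$. For the ``if'' direction I would simply note that if every compact subset of $X$ is finite, then the basic $\tau_k$-neighborhoods $[K,\e]$ with $K$ compact are exactly those with $K$ finite, whence $\tau_k=\tau_p$; squeezing with $\tau_p\le\tau_w\le\tau_k$ gives $\tau_p=\tau_w=\tau_k$ at once. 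This settles one implication together with the final sentence of the statement.

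For the converse I would argue by contraposition, assuming $X$ has an infinite compact set $K$ and producing a $\tau_k$-neighborhood that is not $\tau_w$-open. Suppose to the contrary that $[K,\e]=\{f:\sup_{x\in K}|f(x)|<\e\}$ is $\tau_w$-open; then there are $\mu_1,\dots,\mu_n\in\CC(X)'$ and $\eta>0$ with $V:=\{f:|\mu_i(f)|<\eta,\ i\le n\}\subseteq[K,\e]$. The key step is a scaling argument: for $f\in\bigcap_{i=1}^n\ker\mu_i$ we have $\lambda f\in V\subseteq[K,\e]$ for every $\lambda\in\IR$, which forces $\sup_{x\in K}|f(x)|=0$. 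Hence $\bigcap_{i=1}^n\ker\mu_i\subseteq\{f\in C(X):f|_K=0\}$.

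Now I would invoke Lemma \ref{l:Ascoli-weak-dual}. Using that $K$ is infinite and $X$ is Tychonoff, choose distinct points $x_0,\dots,x_n\in K$ and functions $g_0,\dots,g_n\in C(X)$ with $g_j(x_i)=\delta_{ij}$; these are linearly independent in $\CC(X)$ and can be extended to an infinite independent sequence. Applying the lemma to this sequence and the $n$ functionals $\mu_1,\dots,\mu_n$ produces scalars $a_0,\dots,a_n$, not all zero, with $0\ne f:=\sum_{k=0}^n a_k g_k\in\bigcap_{i=1}^n\ker\mu_i$. By the previous paragraph $f|_K=0$, so $a_i=f(x_i)=0$ for every $i$, contradicting $f\ne 0$. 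This contradiction shows that $[K,\e]$ is not $\tau_w$-open, so $\tau_w\ne\tau_k$.

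The conceptual core, and the only place where something must be arranged carefully, is the dimension count in the last step: a $\tau_w$-neighborhood is cut out by only finitely many functionals, whereas $[K,\e]$ constrains the function on all of the infinite set $K$. The scaling argument converts this tension into the algebraic statement that $n$ functionals would have to annihilate the $(n+1)$-dimensional space of prescribed value-patterns on $n+1$ points of $K$, which is exactly the obstruction delivered by Lemma \ref{l:Ascoli-weak-dual}. The remaining ingredients --- the Tychonoff separation furnishing the Kronecker functions $g_j$ and the two routine topology inclusions recorded above --- present no real difficulty.
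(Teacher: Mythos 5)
Your proof is correct, and while it pivots on the same key Lemma \ref{l:Ascoli-weak-dual} and the same scaling mechanism as the paper, the surrounding construction is genuinely different and more elementary. The paper's necessity argument identifies the dual of $\CC(X)$ with the space $M_c(X)$ of compactly supported regular Borel measures, extracts an infinite discrete sequence $\{x_n\}$ in $K$ with pairwise disjoint neighborhoods (Jarchow, Lemma 11.7.1), builds bump functions on $K$ and extends them to all of $X$ by the Urysohn extension theorem, and then, given a weak neighborhood $U$ cut out by $\mu_0,\dots,\mu_m$, uses Lemma \ref{l:Ascoli-weak-dual} to produce $h\in\bigcap_i\ker(\mu_i)$ with $\|h\|_K=N>0$, so that $(2/N)h\in U\setminus[K,1]$. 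You instead first distill the scaling into the clean structural statement $\bigcap_{i}\ker\mu_i\subseteq\{f\in C(X): f|_K=0\}$ and then contradict it with $n+1$ Kronecker functions $g_j(x_i)=\delta_{ij}$ at distinct points of $K$, which exist by Tychonoff separation of a point from a finite (hence closed) set; this avoids $M_c(X)$, the discrete-sequence lemma, and the extension theorem altogether. Your ``if'' direction via the squeeze $\tau_p\le\tau_w\le\tau_k=\tau_p$ is likewise more elementary than the paper's appeal to the fact that measures in $M_c(X)$ have finite support when all compacta are finite. Two small points, both harmless: Lemma \ref{l:Ascoli-weak-dual} as stated requires an infinite independent sequence, and your remark that $g_0,\dots,g_n$ extend to one is justified since $C(X)$ is infinite-dimensional once $K$ is infinite (alternatively, the lemma's purely linear-algebraic proof only uses the first $m+2$ vectors); and the index bookkeeping --- $m+1=n$ functionals against $m+2=n+1$ vectors --- does match your count, so the contradiction $a_i=f(x_i)=0$ for all $i$ against $f\ne 0$ goes through exactly as you state.
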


\begin{proof}
Let $\tau_k=\tau_w$. Recall that the dual space of $\CC(X)$ is the space $M_c(X)$ of all regular Borel measures on $X$ with compact support, see \cite{Jar}. Now suppose for a contradiction that there is an infinite compact subset $K$ of $X$. To get a contradiction it is sufficient to show that the $\tau_k$-neighborhood $[K,1]$ of $0\in C(X)$ does not contain a $\tau_w$-neighborhood of zero. Since $K$ is infinite, there is an infinite discrete sequence $\{ x_n\}_{n\in\w}$ with pairwise disjoint neighborhoods $V_n$ of $x_n$ in $K$, see \cite[Lemma 11.7.1]{Jar}. For every $n\in\w$ choose a function $\widetilde{f}_n: K \to [0,1]$ with support in $V_n$ and $\widetilde{f}_n(x_n)=1$. We apply the Urysohn extension theorem \cite[3.11(c)]{GiJ} to find a continuous function $f_n:X\to [0,1]$ such that $f_n |_K =\widetilde{f}_n$. Clearly, the functions $f_n$, as well as $\widetilde{f}_n$, are independent. Now let $U$ be an arbitrary standard weak neighborhood of zero in $\CC(X)$. So there is $\e>0$ and a finite family of measures $\mu_0,\dots,\mu_m$ with compact support  such that
\[
U=\{ g\in C(X): |\mu_i(g)|<\e \; \forall i=0,\dots,m\}.
\]
By  Lemma \ref{l:Ascoli-weak-dual}, there are $a_0,\dots,a_{m+1}\in \IR$ such that
\[
0\not= h:=a_0 f_0+\cdots +a_{m+1} f_{m+1} \in \bigcap_{i=0}^m \ker(\mu_i).
\]
Since $h(x_i)=a_if_i(x_i)=a_i$ for every $i=0,\dots,m+1$, we obtain that $h$ has nonzero $C(K)$-norm, set $N:=\| h\|_K >0$. As $(2/N)h \in U$ but $(2/N)h\not\in [K,1]$, it follows that $U\nsubseteq [K,1]$. Therefore  $\tau_k$ is strictly finer than $\tau_w$, a contradiction. Thus every compact subset of $X$ is finite.

Conversely, if every compact subset of $X$ is finite, then every measure in $M_c(X)$ has finite support. Thus $\tau_k=\tau_p=\tau_w$.
\end{proof}

Now we prove Theorem \ref{t:Ck-bar-weakly-Ascoli}.

\begin{proof}[Proof of Theorem \ref{t:Ck-bar-weakly-Ascoli}]
(i)$\Rightarrow$(ii) Theorem \ref{t:Ascoli-c0-bar} implies that $\tau_k=\tau_w$. So $\tau_k=\tau_p=\tau_w$ by Proposition \ref{p:Ascoli-Cp-Ck}. Thus $C_p(X)$ is a barrelled Ascoli space.

(ii)$\Rightarrow$(i) Since $C_p(X)$ is barrelled, every functionally bounded subset of $X$ is finite by the Buchwalter--Schmets theorem. So $\tau_k=\tau_p=\tau_w$ and the assertion follows.
\end{proof}

\begin{proof}[Proof of Corollary \ref{c:Ck-weakly-Acoli}]
Since $X$ is a $\mu$-space, $\CC(X)$ is barrelled by the Nachbin--Shirota theorem. So, if $\CC(X)$ is weakly Ascoli, then $C_p(X)$ is barrelled by Theorem \ref{t:Ck-bar-weakly-Ascoli}.  Now the Buchwalter--Schmets theorem implies that all functionally bounded subsets of $X$ are finite. As $X$ is a $k$-space, this implies that $X$ is discrete. Conversely, if $X$ is discrete, then $\CC(X)=\IR^{X}$ carries the weak topology and is even a $k_\IR$-space by \cite{Nob}.
\end{proof}

The condition to be a $k$-space in Corollary \ref{c:Ck-weakly-Acoli} is essential. Indeed, if $X$ is a countable non-discrete space whose functionally bounded sets are finite, then the barrelled space $\CC(X)=C_p(X)$ is metrizable and hence Ascoli.

Below we prove Theorem \ref{t:Ascoli-weak-dual}.
\begin{proof}[Proof of Theorem \ref{t:Ascoli-weak-dual}]
Suppose for a contradiction that $E$ is infinite dimensional. Therefore there is an independent sequence  $\{ y_n\}_{n\in\w}$ of unit vectors in $E'$. Set $L_n :=\spn\{  y_0,\dots,y_{n}\}\subseteq E'$. By Proposition \ref{p:Finite-direct-summund}, for every $n\in\w$ there is a closed subspace $H_n$ of the space $E'_{w^\ast} := (E', \sigma(E',E))$ such that
\[
E'_{w^\ast} = L_n \oplus H_n.
\]
For every $n\in\w$, let $\pi_n : E'_{w^\ast} \to L_n$ be the continuous projection. As the closed unit ball $B$ of the Banach dual $E'$ is $\sigma(E',E)$-compact, there is $b_n >0$ such that
\[
\pi_n (nB) \subseteq [-b_n,b_n]^{n+1}.
\]
Take a continuous function $g_n: L_n=\IR^{n+1} \to [0,1]$ such that
\[
g_n(x)=0 \mbox{ if } x\in [-b_n,b_n]^{n+1}, \mbox{ and } g_n(x)=1 \mbox{ if } x\not\in [-b_n-1,b_n+1]^{n+1}.
\]
Finally we set $f_n := g_n \circ \pi_n, n\in \w$. To get a contradiction we show that: (1) $f_n\to 0$ in $\CC(E'_{w^\ast})$, and (2) the sequence $\{ f_n\}$ is not equicontinuous at $0\in E'_{w^\ast}$.

(1) Let $K$ be a compact subset of $E'_{w^\ast}$. By the Banach--Steinhaus theorem there is $m\in\w$ such that $K\subseteq mB$. Now if $n\geq m$ we obtain that $f_n |_K =0$. So $f_n\to 0$ in the compact-open topology.

(2) Let $U$ be a standard open neighborhood of zero in $E'_{w^\ast}$. So there are $\delta>0$ and $z_0,\dots,z_n\in E$ such that
\[
U=\{ y\in E' : |y(z_i)|<\delta \; \forall i=0,\dots, m\}.
\]
Since $\{ y_n\}_{n\in\w}$ is independent we apply Lemma \ref{l:Ascoli-weak-dual} to find $a_0,\dots,a_{m+1}\in \IR$ such that
\begin{equation} \label{equ:Ascoli-weak-star}
0\not= v:= a_0 y_0+\cdots +a_{m+1} y_{m+1} \in \bigcap_{i=0}^m \ker(z_i).
\end{equation}
Choose $\lambda>0$ such that $\lambda v \not\in [-b_{m+1} -1, b_{m+1} +1]^{m+2}$. Then $f_{m+1}(\lambda v)=1$. Since $\lambda v\in U$ by (\ref{equ:Ascoli-weak-star}), we obtain $|f_{m+1}(\lambda v)-f_{m+1}(0)|=1$, and hence  $\{ f_n\}$ is not equicontinuous.

Now (1) and (2) show that $E'_{w^\ast}$ is not Ascoli, a contradiction. Thus $E$ is finite-dimensional.
\end{proof}

\bibliographystyle{amsplain}

\end{document}